\theoremstyle{plain}
\newtheorem{thm}{Theorem}[section]
\newtheorem{lem}[thm]{Lemma}
\theoremstyle{remark}
\numberwithin{equation}{section}
\newcommand{\E}{\mathbb{E}}
\newcommand{\N}{\mathbb{N}}
\newcommand{\bP}{\mathbb{P}}
\newcommand{\bQ}{\mathbb{Q}}
\newcommand{\R}{\mathbb{R}}
\newcommand{\Z}{\mathbb{Z}}
\newcommand{\srC}{\mathscr{C}}
\newcommand{\cA}{\mathcal{A}}
\newcommand{\cB}{\mathcal{B}}
\newcommand{\cE}{\mathcal{E}}
\newcommand{\cF}{\mathcal{F}}
\newcommand{\cR}{\mathcal{R}}
\newcommand{\cU}{\mathcal U}
\newcommand{\cJ}{\mathcal{J}}
\newcommand{\al}{\alpha}
\newcommand{\gam}{\gamma}
\newcommand{\ep}{\varepsilon}
\newcommand{\om}{\omega}
\newcommand{\Gam}{\Gamma}
\newcommand{\Om}{\Omega}
\newcommand{\ol}{\overline}
\newcommand{\co}{{\rm co}\,}
\author[W. Jing]{Wenjia Jing$^\ddag$}
\address{$^\ddag$ Department of Mathematics, 
	      The University of Chicago, 
	      5734 S. University Avenue Chicago, 
	      IL 60637, USA}
\email{wjing@math.uchicago.edu}
\author[P.E. Souganidis]{Panagiotis E. Souganidis$^{\sharp,\natural}$}
\address{$^\sharp$ Department of Mathematics, 
	      The University of Chicago, 
	      5734 S. University Avenue Chicago, 
	      IL 60637, USA}
\email{souganidis@math.uchicago.edu}
\author[H.V. Tran]{Hung V. Tran$^{\S,*}$}
\address{$^\S$ Department of Mathematics, 
	      The University of Chicago, 
	      5734 S. University Avenue Chicago, 
	      IL 60637, USA}
\email{hung@math.uchicago.edu}
\address{$\natural$ Partially supported by the NSF grants DMS - 0901802 and DMS - 1266383}
\address{$*$ Partially supported by the NSF grant DMS - 1361236}
\title[Large time average of reachable sets]
{Large time average of reachable sets
and Applications to Homogenization of interfaces moving with oscillatory spatio-temporal velocity}
\begin{document}

\begin{abstract}
We study the averaging of fronts moving with positive oscillatory normal velocity, which is periodic in space and stationary ergodic in time. The problem can be reformulated as the homogenization of coercive level set Hamilton-Jacobi equations with spatio-temporal oscillations. To overcome the difficulties due to the oscillations in time and the sublinear growth of the Hamiltonian, we first study the long time averaged behavior of the associated reachable sets using geometric arguments. The results are new for higher than one dimensions even in the space-time periodic setting.
\end{abstract}

\maketitle

{\bf Keywords:} 
  reachable sets,
  large time average,
  periodic homogenization, 
  stochastic homogenization,
  Hamilton-Jacobi equations,
  viscosity solutions,
  time-dependent Hamiltonian,
  level set method, 		% no AMS number
  front propagation. 	% no AMS number
\smallskip

{\bf AMS Classification:}
  35B27 % Homogenization; PDEs in media with periodic structure
  70H20 % Hamilton-Jacobi equations
  49L25 % Viscosity solutions
\smallskip

%%%%%%%%%%%%%%%%%%%%%%%%%%%%%%%%%%%%%%%%%%%%%%%%%%%%%%%%%%%%%%%%%%%%%%%%%%
\medskip\section{Introduction}\label{sec:Intro}

We investigate  the averaging behavior of fronts moving with positive  oscillatory normal velocity, which is periodic in space and stationary ergodic in time. The problem can be reformulated, using the level-set method,  as the homogenization of coercive level set Hamilton-Jacobi equations with spatio-temporal oscillations. 

In particular, we study the homogenized (averaging) behavior of the solution $u^\ep = u^\ep(x,t,\om)$ to the level-set Hamilton-Jacobi equation
\begin{equation}\label{PDE1}
\begin{cases}
u^\ep_t+a\left(\frac{x}{\ep},\frac{t}{\ep}, \om\right)|Du^\ep|=0 \quad &\text{in} \ \R^n \times (0,\infty),\\
u^\ep =u_0 \quad &\text{on} \ \R^n \times \{0\}, 
\end{cases}
\end{equation}
where $u_0 \in UC(\R^n),$ the space of uniformly continuous functions, and, for each element  $\omega$ of the underlying probability space $(\Om, \cF, \bP)$,  the velocity $a$  is bounded from above and from below away from $0$ uniformly in $x,t$ and $\omega$,  periodic in $x$ and stationary ergodic in $t$; exact definitions are given in the next section. The Hamiltonian $H(x,t,p,\om) = a(x,t,\om)|p|$ inherits the above properties in $x,t$ and, in view of the bounds on $a$,  is coercive and has linear growth in $p$ uniformly in $\omega.$ 

In spite of its simple form and the coercivity of $H$, \eqref{PDE1} falls outside of the scope of the existing homogenization theory for Hamilton-Jacobi equations even in the spatio-temporal periodic setting. The reason is that, contrary to the time independent setting with linear growth and the time dependent setting with superlinear growth, the time oscillations and the linear growth of $H$ in $p$ do not yield any a priori good control, that is independent of $\ep$ estimates,  for the oscillations of the $u^\ep$'s. 

We overcome this difficulty by studying a more general problem, namely  the large time averaging of the reachable sets of the associated control problem which we describe next. 

For $t>0$ and $\omega \in \Omega$, let $\cA_{0,t}(\omega)$ be the set of solutions (admissible paths) to the controlled system 
\begin{equation}\label{control_syst}
\begin{cases}
\gam'(r) = f(\gam(r),r,\xi(r),\omega) := a(\gam(r),r,\omega) \xi(r,\omega) \quad &\text{a.e.} \ r\in (0,t),\\[1mm]
|\xi(r,\omega)| \leq 1 \  \text{a.e.}\   r \in (0,t),%
%\in \ol{B}(0,1) \quad &\text{a.e.}\   r \in (0,t),
\end{cases}
\end{equation}
that is 
$$\cA_{0,t}(\omega):=\{\gam:[0,t]\to \R^n\,:\, |\gam'(r)| \leq a(\gam(r),r,\omega) \quad \text{for a.e.} \ r\in [0,t]\}.$$ 

%denote the set of (admissible) curves in the time interval $[0,t]$, that is solutions 
%% with  speed bounded from above by the given $a:\R^n \times \R \to \R$, that is 
%\[
%\cA_{0,t}:=\{\gam:[0,t]\to \R^n\,:\, |\gam'(r)| \leq a(\gam(r),r) \quad \text{for a.e.} \ r\in [0,t]\},
%\]
%that is, $\cA_{0,t}$ consists of solutions of the  controled  system
%\begin{equation}\label{control_syst}
%\begin{cases}
%\gam'(r) = f(\gam(r),r,\xi(r)) := a(\gam(r),r) \xi(r) \quad &\text{for} \ r\in (0,t),\\
%|\xi(r)| \leq 1  \text{a.e.}\   r \in (0,t)%
%%\in \ol{B}(0,1) \quad &\text{a.e.}\   r \in (0,t),
%\end{cases}
%\end{equation}

The reachable set $\cR_t(x,\omega)$ at time $t>0$ emanating from $x\in \R^n$ is 
\begin{equation}\label{def:R_t}
\cR_t(x,\omega):=\{y \in \R^n\,:\, \text{there exists}\ \gam \in \cA_{0,t}(\omega) \ \text{such that} \ \gam(0)=x, \ \gam(t)=y\},
\end{equation}
and, here, we are interested in the long time average of  $\cR_t(x,\omega)$, that is the the limit, as $t\to \infty$, of  
\begin{equation}\label{average}
t^{-1} \cR_t(x,\omega).
\end{equation}

%More precisely, we are interested in the limit, as $t\to \infty$, of the set
%\begin{equation}\label{average}
%\frac{\cR_t(x)}{t},
%\end{equation}
%where $\cR_t(x)$ denotes the reachable set at time $t>0$ emanating from $x\in \R^n$ and is described as
%\begin{equation}\label{def:R_t}
%\cR_t(x):=\{y \in \R^n\,:\, \text{there exists}\ \gam \in \cA_{0,t} \ \text{such that}, \ \gam(0)=x, \ \gam(t)=y\}.
%\end{equation}
%Here $\cA_{0,t}$ denotes the set of admissible curves in the time interval $[0,t]$ whose speed is bounded from above by the given velocity $a:\R^n \times \R \to \R$, 
%\[
%\cA_{0,t}:=\{\gam:[0,t]\to \R^n\,:\, |\gam'(r)| \leq a(\gam(r),r) \quad \text{for a.e.} \ r\in [0,t]\}.
%\]
%That is, $\cA_{0,t}$ consists of solutions of the following control system
%\begin{equation}\label{control_syst}
%\begin{cases}
%\gam'(r) = f(\gam(r),r,\xi(r)) := a(\gam(r),r) \xi(r) \quad &\text{for} \ r\in (0,t),\\
%\xi(r) \in \ol{B}(0,1) \quad &\text{a.e.}\   r \in (0,t),
%\end{cases}
%\end{equation}
%with the control $\xi \in L^\infty([0,t],\ol{B}(0,1))$. It is known that $\cR_t(x)$ is compact for all $(x,t) \in \R^n \times [0,\infty)$ but is not convex in general; see for instance Cannarsa and Frankowska \cite{CF} and the references therein.
%

%The connection between the solutions to \eqref{PDE1} and \eqref{def:R_t} is the classical Lax-Oleinik formula 
%\begin{equation}\label{takis3}
%u^\ep(x,t)= \inf\{u_0(y) : y\in \cR_\frac{t}{\ep}(\frac{y}{\ep})\}. 
%\end{equation}
%

The study of the long time averaged asymptotic behavior of the reachable sets, for spatio-temporal periodic $a$ and $n = 1$, goes back to Poincar\'e and Denjoy; we refer to the book of Arnol'd \cite{Arno88} for details. In this case, for each  $t \geq 0$, $\cR_t(x) = [\gam_L(t),\gam_R(t)]$ with  $\gam_L, \gam_R$  prescribed by the dynamical systems

\begin{equation*}
\begin{cases}
\gam'_R(t)=a(\gam_R(t),t) \ \  \text{for} \ t >0 \ \  \text{and} \ \ \gam_R(0)=x,\\[1mm]
\gam'_L(t)=- a(\gam_L(t),t) \ \  \text{for} \ t >0\  \ \text{and} \ \  \gam_L(0)=x.
\end{cases}
\end{equation*}

The large time average of \eqref{average} is controlled by the limits, as $t \to \infty$, of $t^{-1}\gam_R(t)$ and $t^{-1} \gam_L(t)$, which exist and are the so-called rotation numbers first defined by Poincar\'e. These results were generalized, always when $n=1$,  for $a$ periodic in $x$ and stationary ergodic in $t$ by Li and Lu \cite{ll}.

To the best of our knowledge, the asymptotic behavior of \eqref{average} for $n \geq 2$ was unknown, even in the space-time periodic case. One of our main results, Theorem \ref{thm:STO-R0} below, characterizes this behavior, for spatial periodic and temporal stationary ergodic $a$ and for all dimension $n$.

Next we recall  the relationship  between the reachable sets and the solution of \eqref{PDE1} %  The the long time average of \eqref{average} and the homogenized behavior of $u^\ep$
which is based on the control interpretation of  \eqref{PDE1} and is given by the well known Lax-Oleinik formula. %For simplicity in what follows we omit the $\omega$ dependence.

The Lagrangian $L(x,t,q,\omega)$, that is the Legendre transform of the map $p \mapsto H(x,t,p,\omega)$, is of the form
\begin{equation}\label{L}
L(x,t,q)=
\begin{cases}
0 \ \text{ if} \  |q| \le a(x,t,\omega),\\[1mm]
+\infty \ \text{otherwise}.
\end{cases}
\end{equation}

Hence the action along a path is finite if and only if this path is admissible, in which case the action is actually zero. As a result, the Lax-Oleinik formula formula for solutions to \eqref{PDE1} for $\ep=1$ gives
\begin{equation*}
u (x,t,\omega) = \inf_{y \in \R^n} \{ u_0(y): x\in \cR_t(y,\omega)\},
\end{equation*}
and, after a space-time scaling, 
\begin{equation*}
u^\ep (x,t,\omega) = \inf_{y \in \R^n} \{ u_0(y):  \frac{x}{\ep} \in \cR_{\frac t \ep}(\frac{y}{\ep},\omega)t\}.
\end{equation*}

We show that there exists a convex and compact set $D \subset \R^n$, such that, for every $t>0$, the almost sure  long time average $\left(\frac{t}{\ep}\right)^{-1}\cR_{\frac {t} {\ep}} \left(\frac{y}{\ep}, \omega\right)$, as $\ep \to 0$, is given by $t^{-1}y + D.$  As a result, the set $\{y: \frac{x}{\ep} \in \cR_{\frac t \ep}\left(\frac{y}{\ep},\omega \right)\}$ converges, as $\ep \to 0$,  to the set $\{y: x \in y + tD\}$. It then  follows that $u^\ep(x,t,\omega)$ converges almost surely  to 
$$\overline u(x,t) := \inf_{y \in \R^n}\{u_0(y) \;:\; x \in y + tD\},$$
which is the solution of the homogenized equation 
\begin{equation}\label{takis}
\begin{cases}
\overline u_t + \overline H(D\overline u)=0 \ \text{in} \ \R^n \times (0,\infty),\\[1mm]
\overline u =u_0 \  \text{on} \ \R^n \times \{0\};
\end{cases}
\end{equation}
and the effective Hamiltonian $\ol{H}$ is defined through the set $D$. See Theorem \ref{thm:stoch} and Section \ref{sec:hom} below for the details.

The periodic homogenization of coercive Hamilton-Jacobi equations %and viscous Hamilton-Jacobi equations 
was first studied by Lions, Papanicolaou and Varadhan \cite{LPV} and, later, Evans \cite{Ev:89, Ev:92}. %and Majda and Souganidis \cite{MS:94}.  
Ishii established in \cite{I:00} homogenization in almost periodic settings. The stochastic homogenization of Hamilton-Jacobi equations with convex and coercive Hamiltonians was established independently by Souganidis \cite{S} and Rezakhanlou and Tarver \cite{RT}, and later Schwab \cite{Sch} considered problems with space-time oscillations and superlinear growth in $p$. 
%Results for viscous Hamilton-Jacobi equations were obtained by Lions and Souganidis \cite{LS2} and Kosygina, Rezakhanlou and Varadhan \cite{KRV}, while problems with space-time oscillations were considered by  Kosygina and Varadhan \cite{KV} and Schwab \cite{Sch}. 
In \cite{LS3} Lions and Souganidis  gave a simpler proof for homogenization in probability using weak convergence techniques. Their program was extended by Armstrong and Souganidis in \cite{AS1,AS2} using the so-called metric problem. %The viscous case was later refined by  Armstrong and Tran in \cite{AT1}. 
%Recently, Armstrong, Tran and Yu established stochastic homogenization for some specific non convex Hamiltonians in \cite{ATY}. In those works, the Hamiltonian grows superlinearly in the momentum variable.
The homogenization of general nonconvex Hamiltonians in random environments remains to date an open problem. A first extension to level-set convex Hamiltonians was proven by Armstrong and Souganidis in \cite{AS2} and, more recently, Armstrong, Tran and Yu \cite{ATY} established stochastic homogenization for a special class (double-well-type) of  Hamiltonians. 

Few results are available for non-coercive Hamiltonians and they all rely on some reduction property that compensates for the lack of coercivity; see, for example, Alvarez and Bardi \cite{AB:10}, Barles \cite{B:07} and Imbert and  Monneau \cite{IM:08}. A different approach, based on nonresonance conditions, was initiated by Arisawa and Lions \cite{AL:98} and extended to periodic noncoercive-nonconvex Hamiltonians by Cardaliaguet in \cite{C:10}. Of special  interest is the study of noncoercive Hamilton-Jacobi equations associated to moving interfaces. 
The homogenization of  time independent  noncoercive level set equations in the periodic setting was established by Cardaliaguet, Lions and Souganidis \cite{CLS:09} and 
recently by Ciomaga, Souganidis and Tran \cite{CST} in the random setting.  %Prior to their work,  Craciun and Bhattacharya  \cite{CB:03} discussed formally and gave numerical examples for periodic homogenization of such problems. Giga, Liu and Mitake  \cite{CLM:14} investigated the  large time behavior of solutions for noncoercive Hamilton-Jacobi equations which model crystal growth, where the Hamiltonian 
%is a generalized version of \eqref{eq::Hamiltonian}. 
The homogenization of the G-equation, which is used as model for fronts propagating with normal velocity and advection, in periodic environments was established by Cardaliaguet, Nolen and Souganidis \cite{CNS} (a special case of space periodic incompressible flows was considered by Xin and Yu \cite{XY}) and  by Cardaliaguet and Souganidis in \cite{CS:13} in random media (a special case was studied by Novikov and Nolen \cite{NN}). %Homogenization of general non-convex Hamiltonians in random environments remains to date an open problem. A first extension to level-set convex Hamiltonians was proven by Armstrong and Souganidis in \cite{AS:13} and, more recently, Armstrong, Tran and Yu \cite{ATY:13} established stochastic homogenization for a special class (double-well-ype) of  Hamiltonians .  

% The situation that $H$ grows linearly in $p$ appears in the $G$-equation which has an additional key difficulty of $H$ being not coercive. The homogenization of the $G$-equation was established by Cardaliaguet, Nolen and Souganidis \cite{CNS} in spatial periodic environments,
%and Xin and Yu \cite{XY} for space periodic incompressible flows independently.
%A specific random case in $2$-dimensional space was obtained by Nolen and Novikov \cite{NN}.
%Cardaliaguet and Souganidis in \cite{CS:13} obtained the general homogenization result in random media, where highly nontrivial techniques were needed to develop certain reachability estimate. Linear growing and non-coercive Hamiltonian appears also if one allows $a$ to change signs in \eqref{PDE1}. This was studied by Cardaliaguet, Lions and Souganidis \cite{CLS} for periodic $a = a(x)$, and recently by Ciomaga, Souganidis and Tran \cite{CST} in the random setting, where they modified the metric approach using domain decompositions and interpolation operators.
%

We explain now in more detail  the nontrivial difference in the behavior of solutions  when the Hamiltonian $H(x,t,p)$ grows linearly in $p$ and oscillates in time, the setting we study here, and when  one of those two situations does not happen. Indeed, when $H = a(x)|p|$ is coercive, that is $a$ is strictly positive,  and has no time oscillations, then one can easily obtain uniform in $\ep$ and global in time  Lipschitz bounds for $u^\ep$,  which depend only on the Lipschitz constant of  $u_0$ and the bounds on $a$. 
%is Lipschitz  and the metric approach and show that the minimal time function $\theta(y,x)$ between two spatial points $y$ and $x$ has Lipschitz bounds.  
If $H$ grows superlinearly in $p$ and oscillates in time, Cardaliaguet and Silvestre \cite{CS:12} obtained local uniform space-time $C^{0,\alpha}-$ estimates, which only depend on the growth condition of the Hamiltonian but not on its smoothness, for the solutions to the oscillatory Hamilton-Jacobi equations.  In both cases, these estimates are useful and important for the homogenization of \eqref{PDE1}. For the problem at hand, however, no such estimates are known.

Another important difference can be seen in the definition, properties and behavior of  the minimal time function, that is the smallest time it takes to go from one point to other using admissible paths, which is fundamental for the homogenization theory in the absence of time oscillations. 

To explain this we describe next the role of the minimal time function in the study of the long time average of \eqref{average} and, hence,  the homogenization of \eqref{PDE1} when there are no time oscillations.  Throughout this discussion we omit the dependence on $\omega$.

%and how it can be used to study the long time average of \eqref{average}. 

When $a$ and, hence, $f$ in the aforementioned control system are independent of $t$, the reachable set $\cR_t(x)$ is characterized in terms of the minimal time to reach a point $y\in \R^n$ from a point $x\in \R^n$, which is defined  as
\begin{equation}\label{min-f}
\theta(y,x):=\inf\{t\geq 0\,:\,y \in \cR_t(x)\}.
\end{equation}
%where we set $\theta(y,x):=+\infty$ if $y \notin \cR_t(x)$ for all $t>0$. 
Indeed, it is immediate  from \eqref{def:R_t} and \eqref{min-f}, that, for all $t>0,$
$$\cR_t(x)=\{y\in \R^n: \theta(y,x) \leq t\}.$$ 
%$$
%\bigcup_{s\leq t}\cR_s(x)=\{y\in \R^n\,:\,\theta(y,x) \leq t\}.
%$$
%In our setting, we also have $\bigcup_{s\leq t}\cR_s(x)=\cR_t(x)$.
One of the key and very natural property of the minimal time function is subadditivity, that is the fact that, for $x,y,z \in \R^n$, %and $\omega \in \Omega$,
$$
\theta(y,x) \leq \theta(y,z)+\theta(z,x).
$$ 
Moreover, In view of the positive lower bound of $a$, there exists a universal constant $C>0$ such that, for all $x,y \in \R^n$, % and $\omega \in \Omega$,,
\begin{equation}\label{C-lip}
\theta(y,x) \leq C|y-x|.
\end{equation}
Consequently, we can apply the subadditive ergodic theorem to $\theta$ and obtain the large time average of it along any direction. It follows that  there exists a $1$-positively homogeneous, Lipschitz continuous  (with Lipschitz constant $C$ in \eqref{C-lip}), convex $\ol{\theta}:\R^n \to \R$ such that, for any $y\in \R^n$ (and almost surely in $\omega$),
\begin{equation}\label{lim-theta}
\lim_{t\to \infty} \frac{1}{t} \theta(ty,0)= \ol{\theta}(y).
\end{equation}
%Moreover, $\ol{\theta}$ is convex and Lipschitz-continuous with Lipschitz constant $C$ in \eqref{C-lip}. 
Using  the relation between minimal time functions and reachable sets, we deduce further that, for all $x\in \R^n$  (and almost surely in $\omega$),
$$
\lim_{t \to \infty} \frac{\cR_t(x)}{t}= \{y\in \R^n\,:\, \ol{\theta}(y) \leq 1\},
$$
%which is a convex set. 
and, this, eventually determines the homogenization limit of \eqref{PDE1} when $a$ is time independent.

We discuss now what happens when $a$ and, hence, the control system depends on $t$ and, in the same time, we outline the methodology of the paper.  In this case, it is necessary  to take into account the starting time in the definition of minimal time function $\theta(x,y)$ in \eqref{min-f}  since, for different starting times, the controls for admissible paths are different. %to investigate its stationarity and subadditivity properties. Indeed, for different starting times, the controls for admissible paths are different. 
We thus need to consider the minimal time function in space-time instead of just space variable. 
In this framework, the minimal time to reach a point $(y,t) \in \R^n \times [0,\infty)$ from $(x,0)\in \R^n \times \{0\}$ can be defined as
\begin{equation}\label{min-f-t}
\theta((y,t),(x,0)):=\begin{cases}
t \qquad &\text{if} \ y \in  \cR_t(x),\\
+\infty \qquad &\text{if} \ y \notin \cR_t(x).
\end{cases}
\end{equation}
%Note that this definition is quite different from \eqref{min-f}. 
Let $\alpha>0$ and $\beta>\alpha$ be respectively the lower and upper bounds of $a$.  Then, for all  $t>0$, 
\[
\ol{B}(x,t\al) \subset \cR_t(x) \subset \ol{B}(x,t\beta),
\]
and  the  space-time reachable set $W((x,0)):=\{(y,t) \in \R^n \times [0,\infty)\,:\,\theta((y,t),(x,0))<\infty\}$ emanating from $(x,0)$ %the  space-time reachable set emanating from $(x,0)$,  then %satisfies 
satisfies 
\begin{equation}\label{cone-like}
\bigcup_{t\geq 0} \ol{B}(x,t \al) \times \{t\} \subset W((x,0)) \subset \bigcup_{t\geq 0} \ol{B}(x,t \beta) \times \{t\}.
\end{equation}
It follows that $W((x,0))$ has a cone-like shape and is controlled by the two lower and upper cones with speed of propagation $\al$ and $\beta$ respectively.
The problem, however, is that $\theta(\cdot,(x,0))=+\infty$ in $\R^n \times [0,\infty) \setminus W((x,0))$. Hence, it does not seem possible to have any estimates like  \eqref{C-lip} or to extend the minimal time function to the whole $\R^n \times [0,\infty)$ as in \cite{CST}.
Because of this lack of control on the minimal time function, we found it necessary  to come up with a new approach.

Here  we look directly at the spatial reachable sets instead of the space-time minimal time function. In some sense, instead of exploring the joint space-time structure of $a$, we let the space periodicity and the time stationarity play different roles in our devising of appropriate subadditive quantities as we explain next. We use $t$ as the index of the subadditive object, which is chosen as some kind of supremum of reachable sets. Indeed in place of $\cR_t(x)$ for each $x$, we consider the enlarged set 
$$\cR_t(Y):=\bigcup_{x\in Y} \cR_t(x),$$ 
with $Y:=[0,1]^n$; at this point we assume that $a$ is $1$-periodic in $x$ for all $t$ and $\omega$.  

It is clear that  $\cR_t(Y)$ serves as a uniform control of $\cR_t(x)$ for $x \in Y$, and, in view of the spatial periodicity of $a$, 
%for all  it controls $\cR_t(x)$ 
all $x \in \R^n$, since  $\cR_t(x) = [x] + \cR_t(\hat{x})$ where $[x]$ is the integer part of $x$ and $\hat{x} \in Y$. 

As  it should be expected, $\cR_t(Y)$ satisfies some sort of subadditivity property. Indeed, if  $\tilde Y:=-Y$ is the reflected unit cube,  we show in Lemma \ref{lem:STO-subadd} that, for all $m,k \in \N$ with $k\leq m$,
\begin{equation}\label{takis5}
\cR_m(Y) \subset \cR_k(Y)+\cR_{m-k}(Y)+\tilde Y.
\end{equation}
%where $\tilde Y:=-Y$ is the reflected unit cube.

%The key tool we will then use to get large time average of \eqref{average} is 
We use then a subadditive ergodic theorem for compact sets due to Sch{\"u}rger \cite{Schurger} and Hansen and Hulse \cite{HH}, which, however, requires convexity; in fact it is explained in  \cite{Schurger} that the result is wrong, in general, for non convex sets. As a consequence this result can not be applied   to the sets $\cR_t(Y)$'s, which are not necessarily convex sets.  Instead we apply the result of \cite{HH} to the convex hull of the $\cR_t(Y)$'s, which are also subbaditive in the sense of \eqref{takis5} and find a compact and convex set $D\subset \R^n$ such that, 
%%However, this cannot be applied directly in our situation as they required the sets considered to be convex. In fact, Sch{\"u}rger pointed out in \cite{Schurger} that if these sets are not convex, the result is wrong in general. We thus convexify the reachable sets by considering their convex hulls, which still have the property that, for all $m,k \in \N$ with $k\leq m$,
%%\begin{equation*}
%%\co \cR_m(Y) \subset \co \cR_k(Y)+\co \cR_{m-k}(Y) +\tilde Y.
%%\end{equation*}
%We then employ the result in \cite{HH} and achieve in Theorem \ref{thm:XR} that,
%there exists a compact and convex set $D\subset \R^n$ such that,
\begin{equation}\label{eq:RY}
\lim_{m \to \infty} \frac{\co \cR_m(Y)}{m}=D,
\end{equation}
which, since, $\cR_m(Y) \subset \co \cR_m(Y)$, is an upper bound for the large time average of $\cR_m(Y)$. The issue is of course to show that $D$ is also a lower bound and, hence, the long time average of $\cR_m(Y)$ itself. As mentioned earlier, this is wrong in general, if we do not have any additional properties of the reachable sets.

In turns out that we can use the structure of the  the control problem to overcome this difficulty and 
this is the key observation in our analysis. In Theorem \ref{thm:STO-R}, we prove that,
for any point $y \in D$, there exists a sequence $\{y_m \in  \frac{\cR_m(Y)}{m} : m \in \N\}$ that stays close to $y$. This is done by designing paths satisfying the ODEs of the controlled system  up to time $m$; the spatial periodicity and temporal stationarity are crucial in this design. It then follows that $D$ is also a lower bound of the large time average of $\cR_m(Y)$ and, hence, it is possible to  remove the convex hull in \eqref{eq:RY}. In view of the monotonicity of $\cR_t(Y)$ in $t$, $D$ is the large time average of $\cR_t(Y)$ along continuous time. Finally, since, in view of the spatial periodicity,  we can control $\cR_t(x)$ by $\cR_t(Y)$ from above and by $\cR_{t-\ell}(Y)$ from below for some finite constant $\ell$, we find that $D$ is also the large time average of $\cR_t(x)$ for all $x \in Y$; see the proofs of Theorem \ref{thm:STO-R} and Theorem \ref{thm:STO-R0} for the details.

The rest of this paper is organized as follows. In the next section, we specify the general assumptions on the velocity $a(x,t,\om)$ and state the main theorems of this paper. In Section \ref{sec:RY} we establish the large time average $D$ of the enlarged reachable sets. We prove the main theorems of this paper in Section \ref{sec:hom}, showing that $D$ is also the large time average of the reachable set starting from any point in the unit cube uniformly, and we apply this result to homogenize \eqref{PDE1}. Finally in Section \ref{sec:applications}, we study the homogenization of moving fronts where there is an ambient drift in the velocity using the asymptotic behavior of the corresponding reachable set; we also investigate the homogenization of a non-coercive Hamilton-Jacobi equation.

%%%%%%%%%%%%%%%%%%%%%%%%%%%%%%%%%%%%%%%%%%%%%%%%%%%%
\subsection*{Notations}
We work in the $n$-dimensional Euclidean space $\R^n$ and we denote by $\Z^n$ the set of points with integer coordinates. $\N$ denotes the set of natural numbers including zero. Let $Y$ be the unit cell $[0,1]^n$ and and $\tilde Y:=-Y=[-1,0]^n$. For any $x \in \R^n$, we set $([x],\hat{x})$ to be the unique pair in $\Z^n \times [0,1)^n$ such that $x = [x] + \hat{x}$. The open ball in $\R^n$ centered at $x$ with radius $r > 0$ is denoted by $B_r(x)$, and this notation is further simplified to $B_r$ if the center is the origin. The cardinality of a set $K$ that has finite number of elements is denoted by $\text{Card} (K)$. The set of non-empty compact subsets of $\R^n$ is denoted by $\srC$. For any $A,B \in \srC$ and any $c \in \R$, we set $A+B := \{x+y \,:\, x \in A, y \in B\}$ and $cA := \{cx \,:\, x \in A\}$. The Hausdorff metric $\rho$ on $\srC$ is defined as $\rho(A,B) := \max\{\sup_{x \in A} \inf_{y \in B} |x-y|, \sup_{x \in B} \inf_{y \in A} |x-y|\}$. For any $A \in \srC$, $\|A\|:=\max \{|x| \,:\, x \in A\}$ and  $\co A$ and $\mathcal{E}(A)$ are respectively the convex hull and set of extreme points of $A$. The set of non-empty compact and convex subsets of $\R^n$, which is a closed subset of $\srC$,  is denoted by $\co \srC$; $C^{0,1}(\R^{n+1})$ s the set of bounded, Lipschitz continuous defined on $\R^n$, $\|\cdot \|_\infty$ is the $L^\infty$-norm of a bounded function,  and $\mathcal B(\Xi)$ is the Borel $\sigma$-algebra the metric space $\Xi$. %when $\Xi=\R^d$, we simply write $\mathcal B.$

%%%%%%%%%%%%%%%%%%%%%%%%%%%%%%%%%%%%%%%%%%%%%%%%%%%%%%%%%%%%%%%%
\section{Assumptions, preliminaries and main results}
\label{sec:prelim}

\subsection*{The setting and assumptions}

We consider a probability space $(\Omega, \cF, \bP)$ endowed with an {ergodic group of measure preserving transformations} $(\tau_k)_{k\in\Z}$, that is, a family of maps $\tau_k:\Omega\to\Omega$ satisfying,  for all $k,k'\in \Z$ and all $\cU\in\cF$,
	\[\tau_{k+k'}=\tau_{k}\circ\tau_{k'}\;\; \hbox{ and }\;\;
	\bP[\tau_k\cU] = \bP[\cU]\]
and 
	\[\hbox{ if }\tau_k(\cU)=\cU \hbox{ for every } k\in\Z,
	\hbox{ then either }\bP[\cU] = 1\hbox{ or }\bP[\cU]=0.\]

As far as  $a:\R^n \times \R \times\Omega\to\R$ is concerned, we assume that 
%measurable with respect to the $\sigma$-algebra generated  by $\cB\times\cF$, where  $\cB$ denotes the Borel $\sigma$-algebra of $\R^{n+1}$, and that
\begin{enumerate}
\item[(A0)]  $a$ is measurable with respect to $\cB(\R^{n+1})\times\cF$,
 \item[(A1)]  $a$ is $\Z^n$-periodic in $x$ and stationary in $t$ with respect to $(\tau_k)_{k\in\Z}$, that is, for every $(x,l)\in \R^n \times \Z^n$, $(t,k)\in \R \times \Z$, and $\omega\in\Omega$, 
\[
a(x+l,t,\tau_k\omega) = a(x,t+k,\omega),
\]
 
 \item[(A2)] $a(\cdot,\cdot,\omega) \in C^{0,1}(\R^{n+1})$ for each $\omega$ and there exist $\al,\beta>0$ such that, for all $(x,t)\in \R^{n+1}$ and $\omega \in \Omega$,
\begin{equation}\label{A2}
\al \leq a(x,t,\omega) \leq \beta.
\end{equation}
\end{enumerate}

For simplicity, we combine all the assumptions into
\begin{enumerate}
\item[(A)] $a = a(x,t,\om)$ satisfies (A0), (A1) and (A2).
\end{enumerate}

\subsection*{The admissible paths and reachable sets} We recall and define some notions concerning the reachable sets.
For $t\ge s$ and $\omega\in \Omega$, the set $\cA_{s,t}(\omega)$ of admissible paths is given by 
\begin{equation}\label{def:A-Om}
\cA_{s,t}(\omega) := \{\gam:[s,t] \to \R^n \,:\, |\gam'(r)| \leq a(\gam(r),r,\omega) \quad \text{for a.e.} \ r\in [s,t]\}.
\end{equation}
The space-time reachable set corresponding to $(x,s,\omega)\in \R^n\times \R\times \Omega$ is defined by
\begin{equation}\label{def:Gam-Om}
\Gam(x,s)(\omega):=\{(y,t)\in \R^{n}\times [s,\infty):  \text{there exists} \ \gam \in \cA_{s,t}(\omega) \ \text{such that}\ \gam(s)=x,\  \gam(t)=y\}.
\end{equation}
For $t\geq s$, the (space-time) reachable set from $(x,s)$ at time $t$ is
\begin{equation}\label{def:Gam_t-Om}
\Gam_t(x,s)(\omega) := \Gam(x,s)(\om) \cap  (\R^n \times [s,t]). %\{(y,r) \in \R^{n+1} \,:\, s\le r \le t\}.
\end{equation}
The projection of $\Gam_t(x,s)(\om)$ on $\R^n$ is % the spatial variable is denoted by $\cR_t(x,s)(\om)$, which is 
given by
\begin{equation}\label{def:R-Om}
\cR_t(x,s)(\omega): = \{y \in \R^n \,:\, \text{there exists}\ \gam \in \cA_{s,t}(\om) \ \text{such that}\ \gam(s)=x,\ \gam(t)=y \};
\end{equation}
note that, in view of the discussion in the Introduction, $\cR_t(x,s)(\omega)$  is the (spatial) reachable set at time $t$ starting from $x$ with initial time $s$. 

In our analysis, we use  the ``enlarged'' reachable set 
\[
\cR_t(Y,s)(\omega):= \cup_{x\in Y} \cR_t(x,s)(\omega),
\]
which is the set of spatial points reachable at $t$ starting from the unit cell $Y$ at time $s$. 

Throughout the paper, when  the initial time $s=0$,  we write 
$$\cR_t(x)(\omega):=\cR_t(x,0)(\omega) \ \text{and}  \  \cR_t(Y)(\om) := \cR_t(Y,0)(\om).$$%, when  the initial time $s=0$.
%These notations are used throughout the paper.

We discuss next some properties of the reachable sets which we will use in the sequel. 

The reachable set $\cR_t(x,s)$ for all $x \in \R^n$, $s \in \R$ and $t \ge s$, is compact, and so is the enlarged reachable set $\cR_t(Y,s)$; see for instance Cannarsa and Frankowska \cite{CF} and the references therein. If the control system \eqref{control_syst} is linear, that is if $f(x,t,\xi)=Mx+L\xi$, then $\cR_t(x)$ is convex. In general,  when $f$ is nonlinear in $x$ and $\xi$, then $\cR_t(x)$ is not convex. Since we do not assume that $a$ is linear, $\cR_t(x)$ is presumably not convex, which makes the study of large time limit of $\cR_t(x)$ much more interesting. 

The next observation was already discussed in the Introduction.

\begin{lem}\label{lem:Rab} 
Assume {\upshape(A)}.  Then, for any $(x,t,\omega) \in \R^n \times [0,\infty) \times \Omega$, 
\begin{equation}\label{Rt-base}
\ol{B}_{t\al}(x) \subset \cR_t(x)(\omega) \subset \ol{B}_{t\beta}(x) 
\end{equation}
and
\begin{equation}\label{Rt-Y-base}
\ol{B}_{t\al}(0) \subset \cR_t(Y)(\om) \subset \ol{B}_{t\beta}(0) + Y.
\end{equation}
\end{lem}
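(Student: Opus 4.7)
The plan is to dispatch the four inclusions independently, using only the two-sided bound \eqref{A2} on $a$ and the definitions of $\cA_{0,t}(\om)$ and $\cR_t(x)(\om)$; nothing else is needed. No real obstacle is expected—this lemma is essentially a sanity check recording that $\cR_t$ sits between the upper- and lower-cone reachable sets of the constant-speed problems.

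\emph{Upper bound $\cR_t(x)(\om)\subset \ol{B}_{t\beta}(x)$.} I would take an arbitrary $y\in\cR_t(x)(\om)$ and an admissible $\gam\in\cA_{0,t}(\om)$ with $\gam(0)=x,\gam(t)=y$. By \eqref{def:A-Om} and assumption (A2), $|\gam'(r)|\le a(\gam(r),r,\om)\le\beta$ for a.e.\ $r$, so
\[
|y-x|\;=\;\Bigl|\int_0^t\gam'(r)\,dr\Bigr|\;\le\;\int_0^t|\gam'(r)|\,dr\;\le\;t\beta.
\]

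\emph{Lower bound $\ol{B}_{t\al}(x)\subset \cR_t(x)(\om)$.} Given $y$ with $|y-x|\le t\al$, I would exhibit an admissible path by taking the straight-line interpolation $\gam(r):=x+(r/t)(y-x)$, $r\in[0,t]$. Then $\gam(0)=x$, $\gam(t)=y$, and
\[
|\gam'(r)|\;=\;\frac{|y-x|}{t}\;\le\;\al\;\le\;a(\gam(r),r,\om),
\]
again by (A2), so $\gam\in\cA_{0,t}(\om)$ and hence $y\in\cR_t(x)(\om)$.

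\emph{Passage to the enlarged reachable set.} For the lower inclusion $\ol{B}_{t\al}(0)\subset\cR_t(Y)(\om)$, I would note that $0\in Y$, and so
\[
\ol{B}_{t\al}(0)\;\subset\;\cR_t(0)(\om)\;\subset\;\bigcup_{x\in Y}\cR_t(x)(\om)\;=\;\cR_t(Y)(\om),
\]
by the first part applied with $x=0$. For the upper inclusion, any $y\in\cR_t(Y)(\om)$ belongs to some $\cR_t(x)(\om)$ with $x\in Y$; by the first part, $y=x+z$ with $|z|\le t\beta$, and thus $y\in Y+\ol{B}_{t\beta}(0)$.
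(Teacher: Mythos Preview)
Your proof is correct and matches the paper's own argument essentially line for line: the paper also uses the straight-line path for the lower inclusion, the trivial speed bound for the upper one, and then passes to $\cR_t(Y)$ via $0\in Y$ and the union over $x\in Y$. The only cosmetic difference is that the paper phrases the lower-bound path as ``go at speed $\alpha$ then stop'' rather than your constant-speed interpolation, but both constructions work equally well.
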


\begin{proof}
If $|y-x| \le \alpha t$, then the straight line path connecting $y$ to $x$ with speed $\alpha$ is admissible, which implies the first inclusion of \eqref{Rt-base}. On the other hand, for any $\gam \in \cA_{0,t}(\om)$ with $\gam(0) = x$, $|\gam(t) - x| \le \|a\|_\infty  t \le \beta t$, which yields the second inclusion of \eqref{Rt-base}.

In view of $\cR_t(0)(\om) \subset \cR_t(Y)(\om)$, the first inclusion of \eqref{Rt-Y-base} follows from that of \eqref{Rt-base}. On the other hand, from the definition of $\cR_t(Y)(\om)$ and the second inclusion in \eqref{Rt-base}, we have $\cR_t(Y)(\om) \subset \bigcup_{x \in Y} \ol{B}_{t\beta}(x) \subset \ol{B}_{t\beta}(0) + Y$.
\end{proof}

The second result implies  that the reachable sets grow, a fact that follows from their monotonicity in time.

\begin{lem}\label{lem:monot} Assume {\upshape(A)} and fix $\om \in \Om$ and $x \in \R^n$. For any $s \in \R$ and $t_2 \ge t_1 \ge s$, %we have 
\begin{equation}\label{mono-t}
\cR_{t_1}(x,s)(\om) \subset \cR_{t_2}(x,s)(\om),
\end{equation}
and, for any $t \in \R$ and $s_1 \le s_2 \le t$, 
\begin{equation}\label{mono-s}
\cR_{t}(x,s_2)(\om) \subset \cR_{t}(x,s_1)(\om).
\end{equation}
These relations are still true  for the enlarged reachable sets.%if we replace $x$ by $Y$.
\end{lem}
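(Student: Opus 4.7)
The plan is to verify both inclusions by concatenating the given admissible path with a constant path, exploiting the fact that $a \ge \alpha > 0$ so that a stationary (zero velocity) trajectory always satisfies the admissibility constraint $|\gamma'(r)| \le a(\gamma(r),r,\omega)$.

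For the inclusion \eqref{mono-t}, I would take $y \in \cR_{t_1}(x,s)(\omega)$ and pick $\gamma \in \cA_{s,t_1}(\omega)$ with $\gamma(s)=x$ and $\gamma(t_1)=y$. I would then extend $\gamma$ to a path $\tilde\gamma:[s,t_2] \to \R^n$ by setting $\tilde\gamma(r) := \gamma(r)$ on $[s,t_1]$ and $\tilde\gamma(r) := y$ on $[t_1,t_2]$. The derivative $\tilde\gamma'$ is $0$ on $(t_1,t_2)$, which clearly satisfies $|\tilde\gamma'(r)| = 0 \le a(\tilde\gamma(r),r,\omega)$, so $\tilde\gamma \in \cA_{s,t_2}(\omega)$. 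Since $\tilde\gamma(s)=x$ and $\tilde\gamma(t_2)=y$, we get $y \in \cR_{t_2}(x,s)(\omega)$.

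For the inclusion \eqref{mono-s}, I would take $y \in \cR_t(x,s_2)(\omega)$ with associated path $\gamma \in \cA_{s_2,t}(\omega)$ joining $x$ at $s_2$ to $y$ at $t$. Now I prepend the constant path at $x$: define $\tilde\gamma:[s_1,t]\to\R^n$ by $\tilde\gamma(r) := x$ on $[s_1,s_2]$ and $\tilde\gamma(r) := \gamma(r)$ on $[s_2,t]$. Again $|\tilde\gamma'(r)|=0 \le a$ on $(s_1,s_2)$, so $\tilde\gamma \in \cA_{s_1,t}(\omega)$, giving $y \in \cR_t(x,s_1)(\omega)$.

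The enlarged reachable sets are handled by taking unions: since $\cR_t(Y,s)(\omega) = \bigcup_{x \in Y} \cR_t(x,s)(\omega)$, both monotonicity statements transfer immediately through the union. There is no real obstacle here — the only subtlety worth noting is the measurability/absolute continuity of the concatenated path, but this is routine since each piece is absolutely continuous and they match at the junction point; the essentially bounded a.e. velocity constraint is preserved because it holds on each piece separately. In fact the argument does not use periodicity, stationarity, or even the upper bound $\beta$, only the positivity $\alpha>0$ (implicitly, only that $a \ge 0$), which is why the lemma holds pointwise in $\omega$.
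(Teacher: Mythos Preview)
Your proof is correct and follows essentially the same approach as the paper: extend (respectively, prepend) the given admissible path by a constant path, and then take unions over $Y$ for the enlarged sets. The paper carries out the argument for \eqref{mono-t} explicitly and simply states that \eqref{mono-s} and the enlarged case are proved similarly, so your write-up is in fact slightly more detailed.
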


\begin{proof}
Fix  $y \in \cR_{t_1}(x,s)$, choose $\gam \in \cA_{s,t_1}(\om)$ such that $\gam(0) = x$ and $\gam(t_1) = y$, and define $\widetilde\gam:[0, t_2] \in \R^n$ by $\widetilde\gam(r) = \gam(r)$ for $r \in [0,t_1]$ and $\widetilde\gam(r) = y$ for $r \in [t_1,t_2]$. It is immediate that  $\widetilde\gam \in \cA_{s,t_2}(\om)$ and $\widetilde\gam(t_2) = y$. This proves \eqref{mono-t}. The inclusion  \eqref{mono-s} and the corresponding results for $\cR_t(Y,s)$ are proved similarly.
%can be proved by the same arguments.
\end{proof}

%\begin{rem}
%The reachable set $\cR_t(x,s)$ for all $x \in \R^n$, $s \in \R$ and $t \ge s$, and hence the enlarged reachable set $\cR_t(Y,s)$, are compact; see for instance Cannarsa and Frankowska \cite{CF} and the references therein. For the control system \eqref{control_syst}, if $f(x,t,\xi)=Mx+L\xi$ is linear, $\cR_t(x)$ is also convex. In general, 
%when $f$ is nonlinear in $x$ and $\xi$, $\cR_t(x)$ is not convex. We do not assume linearity of $a$ in this paper and hence $\cR_t(x)$ is presumably not convex, which makes the study of large time limit of $\cR_t(x)$ much more interesting.  
%\end{rem}
%%%%%%%%%%%%%%%%%%%%%%%%%%%%%%%%%%%%%%%%%%%%%%%%%%%%%%%%%%%%%%%%
\subsection*{The subadditive ergodic theorem for compact convex sets}

%Let $(\srC,\rho)$ denote the metric space of non-empty compact subsets in $\R^d$, equipped with the Hausdorff metric. Note that $\co \srC$ is a closed subset of $\srC$. 

A key tool that we will use is the subadditive ergodic theorem for compact convex sets. We say that a family of $\srC$-valued random sets $X = \left(X_{k,m}(\om)\right)_{0 \le k < m}$, where $k,m \in \N$, is stationary if
\begin{equation*}
X_{m+l,k+l}(\om) = X_{m,k}(\tau_l \om), \quad  \text{for all} \quad l, m, k\in \N, \ m \le k, \ \text{and}\ \om \in \Om,
\end{equation*}
and subadditive if
\begin{equation*}
X_{m,k}(\om) \subset X_{m,l}(\om) + X_{l,k}(\om), \quad \text{for all} \quad l,m,k \in \N , \ m < l < k \ \text{and} \ \om \in \Om.
\end{equation*}

A more general version of the next result is proved in \cite{Schurger} and \cite{HH}
%by Sch\"urger \cite{Schurger} and Hansen and Hulse \cite{HH}, where it appears in a more general form. For example, $X$ is relaxed to be super-stationary and $\srC$ could be the set of non-empty compact subsets of a Banach space. Recall that for $A \in \srC$, $\|A\| := \max\{|x| \;:\; x \in A\}$.

\begin{thm}[Subadditive ergodic theorem]\label{thm:subadd} Let $X = \left(X_{m,k}(\om)\right)_{0\le k < m}$ be a stationary subadditive family of $\co \srC$-valued random sets defined on $(\Om,\cF,\bP)$ and assume that $\E \|X_{0,1}\| \le C$ for some $C>0$. Then there exists a $\co \srC$-valued set $D$ and a subset $\Om_1 \subset \Om$ with full measure, such that $n^{-1}X_{0,n}(\om)$ converges to $D$ in $(\srC,\rho)$, as $n \to \infty$, for all $\om \in \Om_1$.
\end{thm}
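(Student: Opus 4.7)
The strategy is to reduce the set-valued statement to the classical (scalar) Kingman subadditive ergodic theorem by passing to support functions. For $K\in\co\srC$, recall the support function
$$h_K(p):=\sup_{x\in K}\langle p,x\rangle,\qquad p\in\R^n,$$
and the facts that $h_{A+B}=h_A+h_B$, $h_{cA}=c\,h_A$ for $c\ge 0$, that $A\subset B$ implies $h_A\le h_B$, and that $K\mapsto h_K|_{S^{n-1}}$ is an isometry of $(\co\srC,\rho)$ onto a closed subcone of $(C(S^{n-1}),\|\cdot\|_\infty)$. Under this correspondence, the geometric hypotheses on $(X_{m,k})$ translate directly into subadditivity and stationarity of the real-valued processes $Y^{(p)}_{m,k}(\om):=h_{X_{m,k}(\om)}(p)$ for each fixed $p$.

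First, for a fixed $p\in\R^n$, the inclusion $X_{m,k}\subset X_{m,l}+X_{l,k}$ yields $Y^{(p)}_{m,k}(\om)\le Y^{(p)}_{m,l}(\om)+Y^{(p)}_{l,k}(\om)$, the stationarity relation gives $Y^{(p)}_{m+j,k+j}(\om)=Y^{(p)}_{m,k}(\tau_j\om)$, and the bound $|Y^{(p)}_{0,1}|\le |p|\,\|X_{0,1}\|$ together with $\E\|X_{0,1}\|\le C$ supplies the needed $L^1$ hypothesis. Kingman's theorem, applied with the ergodicity of $(\tau_k)_{k\in\Z}$, then produces a full-measure set $\Om_p\subset \Om$ and a constant $\ol h(p)\in\R$ with $n^{-1}h_{X_{0,n}(\om)}(p)\to\ol h(p)$ for $\om\in\Om_p$. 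Next I would choose a countable dense set $Q\subset\R^n$, define $\Om_1:=\bigcap_{p\in Q}\Om_p$, which still has full measure, and extend $\ol h$ to all of $\R^n$ by positive homogeneity; $\ol h$ is automatically sublinear on $Q$ as a pointwise limit of the sublinear functions $n^{-1}h_{X_{0,n}(\om)}$.

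The main obstacle is promoting pointwise a.s.\ convergence on the countable set $Q$ to uniform convergence on $S^{n-1}$, which is what Hausdorff convergence really requires. For this I would first establish an almost-sure linear growth bound: subadditivity along $0<1<\cdots<n$ gives $\|X_{0,n}(\om)\|\le\sum_{j=0}^{n-1}\|X_{j,j+1}(\om)\|$, and stationarity identifies $\|X_{j,j+1}(\om)\|$ with $\|X_{0,1}(\tau_j\om)\|$, so Birkhoff's ergodic theorem controls $n^{-1}\|X_{0,n}(\om)\|$ by $\E\|X_{0,1}\|\le C$ for $\om$ in a further full-measure set, which I absorb into $\Om_1$. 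Each function $p\mapsto n^{-1}h_{X_{0,n}(\om)}(p)$ is then Lipschitz on $\R^n$ with an eventually-uniform constant; combined with pointwise convergence on $Q$, an Arzel\`a--Ascoli argument gives uniform convergence on $S^{n-1}$, hence continuity and sublinearity of $\ol h$ on $\R^n$, and therefore the existence of a unique $D\in\co\srC$ with $\ol h=h_D$. The isometry between $\co\srC$ and its image in $C(S^{n-1})$ then yields $\rho(n^{-1}X_{0,n}(\om),D)\to 0$ for every $\om\in\Om_1$, completing the proof.
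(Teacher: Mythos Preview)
The paper does not give its own proof of this theorem; it simply quotes the result and attributes it to Sch\"urger and Hansen--Hulse. So there is no ``paper's proof'' to compare against, and your proposal stands on its own merits.

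Your approach is the standard one and is essentially correct. Reducing to support functions $h_{X_{m,k}(\om)}(p)$, applying Kingman's scalar subadditive ergodic theorem for each fixed direction $p$ (the ergodicity of $(\tau_k)$ is part of the paper's standing assumptions, so the limit is indeed deterministic), and then upgrading from pointwise convergence on a countable dense set to uniform convergence on $S^{n-1}$ via an equi-Lipschitz bound is exactly how results of this type are proved. The key technical point---controlling $n^{-1}\|X_{0,n}(\om)\|$ almost surely by iterating subadditivity and invoking Birkhoff on $\|X_{0,1}(\tau_j\om)\|$---is handled correctly, and this is what makes the Arzel\`a--Ascoli step go through. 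One small remark: to ensure the Kingman limit $\ol h(p)$ is finite (not $-\infty$), you should note explicitly that $\E Y^{(p)}_{0,n}\ge -|p|\,\E\|X_{0,n}\|\ge -|p|\,nC$, so $\inf_n n^{-1}\E Y^{(p)}_{0,n}\ge -|p|C$; this is implicit in your $L^1$ bound but worth stating. Otherwise the argument is complete.
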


%%%%%%%%%%%%%%%%%%%%%%%%%%%%%%%%%%%%%%%%%%%%%%%%%%%%%%%%%%%%%%%%
\subsection*{Main theorems}

The first main theorem of this paper concerns the large time average of the reachable sets starting from any point in the unit cell.

\begin{thm}\label{thm:STO-R0}
Assume {\upshape(A)}. There exists a compact and convex  $D\subset \R^n$ and an event $\widetilde \Omega \in \cF$ of full probability such that, for each $\om \in \widetilde\Omega$ and any $x\in Y$, 
\begin{equation}\label{lim:R0t}
\lim_{t\to \infty}\frac{\cR_t(x)(\om)}{t}=D \quad \text{in} \quad (\srC,\rho)
\end{equation}
and,% furthermore,
\begin{equation}\label{lim:Rt-uni}
\lim_{t\to \infty} \sup_{x\in Y} \rho\left(\frac{\cR_t(x)(\om)}{t},D\right)=0.
\end{equation}
\end{thm}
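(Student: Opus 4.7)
The plan is to reduce Theorem \ref{thm:STO-R0} to the already-established large-time average of the enlarged reachable sets (Theorem \ref{thm:STO-R}) by sandwiching $\cR_t(x)(\om)$ between two sets that have the same long-time average $D$ and do not depend on $x$. The upper inclusion $\cR_t(x)(\om) \subset \cR_t(Y)(\om)$ is immediate from $x \in Y$, and by Theorem \ref{thm:STO-R} the right-hand side divided by $t$ converges to $D$ in Hausdorff distance on a full-measure event $\Om_0$.

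For the matching lower inclusion I would fix the integer $\ell := \lceil \sqrt{n}/\al \rceil$, which is large enough that any two points of $Y$ can be connected by a straight-line segment of speed at most $\al$ in time $\ell$. The key sub-claim is that for every $x \in Y$ and every $t \ge \ell$,
\[
\cR_t(Y,\ell)(\om) \;\subset\; \cR_t(x,0)(\om) = \cR_t(x)(\om).
\]
Indeed, for $y$ in the left-hand side there exist $x' \in Y$ and $\gam \in \cA_{\ell,t}(\om)$ with $\gam(\ell)=x'$ and $\gam(t)=y$; prepending to $\gam$ the segment from $x$ to $x'$ traversed at the constant speed $|x-x'|/\ell \le \sqrt n/\ell \le \al \le a$ produces an element of $\cA_{0,t}(\om)$ that reaches $y$ from $x$ at time $t$. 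Since $\ell \in \Z$, assumption (A1) yields the identity $\cR_t(Y,\ell)(\om) = \cR_{t-\ell}(Y)(\tau_\ell \om)$, and setting $\widetilde\Om := \Om_0 \cap \tau_\ell^{-1}\Om_0$ (a full-probability event, since $\tau_\ell$ preserves $\bP$) ensures that $\cR_{t-\ell}(Y)(\tau_\ell\om)/(t-\ell) \to D$ for every $\om \in \widetilde\Om$.

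Combining the two inclusions, for every $\om \in \widetilde\Om$ and $x \in Y$ I obtain the sandwich
\[
\frac{\cR_{t-\ell}(Y)(\tau_\ell\om)}{t} \;\subset\; \frac{\cR_t(x)(\om)}{t} \;\subset\; \frac{\cR_t(Y)(\om)}{t},
\]
whose outer sets both converge to $D$ in the Hausdorff metric because $(t-\ell)/t \to 1$. The standard Hausdorff-sandwich observation ($A_t \subset C_t \subset B_t$ with $A_t, B_t \to D$ implies $C_t \to D$, since $\sup_{c\in C_t}\dist(c,D) \le \sup_{b\in B_t}\dist(b,D)$ and $\sup_{d \in D}\dist(d,C_t) \le \sup_{d\in D}\dist(d,A_t)$) yields \eqref{lim:R0t}. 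Since the bracketing sets do not depend on $x$, the resulting estimate on $\rho(\cR_t(x)(\om)/t,D)$ is uniform in $x \in Y$, giving \eqref{lim:Rt-uni} simultaneously.

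The substantive work is not in this reduction but in the preceding Theorem \ref{thm:STO-R}, where the path-construction argument sketched in the Introduction is required to upgrade the subadditive ergodic theorem applied to $\co\cR_m(Y)$ to the genuinely non-convex reachable sets. Once that theorem is in hand, the present statement follows from it by the elementary path-concatenation argument above, which uses only the positive lower bound $\al$ on $a$ and the integer-valued time stationarity.
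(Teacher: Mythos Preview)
Your proposal is correct and follows essentially the same approach as the paper's own proof: both establish the sandwich $\cR_{t-\ell}(Y)(\tau_\ell\om) \subset \cR_t(x)(\om) \subset \cR_t(Y)(\om)$ via a short admissible bridge connecting $x$ to any point of $Y$ in time $\ell$, and then invoke Theorem~\ref{thm:STO-R} on both outer sets. A minor improvement in your write-up is that you explicitly take $\widetilde\Om := \Om_0 \cap \tau_\ell^{-1}\Om_0$ to guarantee that Theorem~\ref{thm:STO-R} applies at $\tau_\ell\om$ as well; the paper handles this point implicitly.
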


%The second main theorem of this paper is the almost sure homogenization of \eqref{PDE1}, with $a$ satisfying the general setting stated earlier. 

%%%%%%%%%%%%%%%%%%%%%%%%%%%%%%%%%%%%%%%%%%%%%%%%
%\subsection*{Identification of $\ol{H}$}\label{sec:iden}
Next, we identify the effective Hamiltonian from the compact convex set $D$ of Theorem \ref{thm:STO-R0} as follows. Let
\[
\ol{L}(q):=\begin{cases}
0 \qquad &\text{for} \ q \in D,\\
+\infty \qquad &\text{otherwise,}
\end{cases}
\]
and, for $p\in \R^n$, define
\begin{equation}\label{Hbar-def}
\ol{H}(p):=\sup_{q\in \R^n} \left(p\cdot q - \ol{L}(q)\right)=\sup_{q\in D} \ p\cdot q.
\end{equation}
It is straightforward that $\ol{H}$ is convex and $1$-positively homogeneous. 

Let $\overline u$ be the solution of the following equation
\begin{equation}\label{PDE-hom}
\begin{cases}
\overline u_t+\ol{H}(D\overline u)=0 \quad &\text{in} \ \R^n \times (0,\infty),\\[1mm]
\overline u =u_0 \quad &\text{on} \ \R^n \times \{0\}.
\end{cases}
\end{equation}

The homogenization result is:

\begin{thm}\label{thm:stoch}
Assume {\upshape (A)} and let $\widetilde\Omega$ be as defined in Theorem \ref{thm:STO-R0}. Then, for each $\omega\in\widetilde\Omega$, %and $T>0$,
the solution $u^\ep=u^\ep(\cdot, \cdot, \omega)$ of \eqref{PDE1} converges locally uniformly in %$\R^n  \times [0,T]$ 
$\R^n \times [0,\infty)$ to the solution $\overline u $ of \eqref{PDE-hom}.
\end{thm}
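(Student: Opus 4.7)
The approach is to transfer the uniform reachable-set convergence of Theorem \ref{thm:STO-R0} through the Lax-Oleinik representations
\[
u^\ep(x,t,\om) = \inf\{u_0(y) : x \in \ep \cR_{t/\ep}(y/\ep, \om)\}, \qquad \ol u(x,t) = \inf\{u_0(y) : x \in y + tD\},
\]
turning convergence of the sets into convergence of the infima. The first step is to upgrade the uniform Hausdorff convergence from starting points $x \in Y$ to all starting points $y \in \R^n$. By $\Z^n$-periodicity of $a$ in $x$, $\cR_{t/\ep}(y/\ep, \om) = [y/\ep] + \cR_{t/\ep}(\widehat{y/\ep}, \om)$, so after scaling by $\ep$ and using $\ep\widehat{y/\ep} \to 0$ together with \eqref{lim:Rt-uni} (and linearity of Hausdorff distance under scaling), one obtains
\[
\lim_{\ep \to 0}\; \sup_{y \in K,\, t \in [t_1, t_2]} \rho\bigl(\ep\cR_{t/\ep}(y/\ep, \om),\, y + tD\bigr) = 0
\]
for every compact $K \subset \R^n$ and every $0 < t_1 \le t_2 < \infty$.

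For the upper bound $\limsup_{\ep \to 0} u^\ep(x_\ep, t_\ep, \om) \le \ol u(x, t)$ along any $(x_\ep, t_\ep) \to (x, t)$ with $t > 0$, given $\eta>0$ pick a near-minimizer $y^*$ for $\ol u(x,t)$. Since $\ol B_\alpha \subset D$ (by passing to the limit in Lemma \ref{lem:Rab}), the origin is interior to $D$, and the shift $y^{**} := y^* + (2\eta/t)(x - y^*)$ satisfies $x - y^{**} \in (t - 2\eta) D$. Set $\tilde y_\ep := y^{**} + (x_\ep - x) \to y^{**}$ so that $x_\ep - \tilde y_\ep = x - y^{**} \in (t - 2\eta)D \subset (t_\ep - \eta)D$ for $\ep$ small. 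The geometric inclusion, obtained by extending any admissible path by a straight-line segment of speed $\alpha$,
\[
\ep \cR_{t_\ep/\ep}(\tilde y_\ep/\ep, \om) \;\supset\; \ep \cR_{(t_\ep - \eta)/\ep}(\tilde y_\ep/\ep, \om) + \ol B_{\eta\alpha},
\]
combined with the uniform Hausdorff convergence of the first display (with tolerance below $\eta\alpha$), forces $\tilde y_\ep + (t_\ep - \eta)D \subset \ep\cR_{t_\ep/\ep}(\tilde y_\ep/\ep, \om)$. Hence $x_\ep$ lies in the reachable set, so $u^\ep(x_\ep, t_\ep, \om) \le u_0(\tilde y_\ep)$; uniform continuity of $u_0$ together with $\eta \to 0$ yields the claim.

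For the matching $\liminf$ bound, Lemma \ref{lem:Rab} confines any near-minimizer $y_\ep$ to $\ol B_{t_\ep \beta}(x_\ep)$, so along a subsequence $y_\ep \to \bar y$; by the uniform set convergence, $x_\ep \in y_\ep + t_\ep D + \ol B_{\delta_\ep}$ with $\delta_\ep \to 0$, and compactness of $D$ gives $x \in \bar y + tD$, hence $u_0(\bar y) \ge \ol u(x, t)$. Together these two bounds imply locally uniform convergence of $u^\ep(\cdot,\cdot,\om)$ to $\ol u$ on $\R^n \times (0, \infty)$; the initial layer at $t=0$ is handled by the pinching $\inf_{|y-x|\le t\beta} u_0(y) \le u^\ep(x, t, \om) \le \inf_{|y-x|\le t\alpha} u_0(y)$ that follows directly from Lemma \ref{lem:Rab}, together with uniform continuity of $u_0$.

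The main obstacle is that Hausdorff set convergence of $\ep\cR_{t/\ep}(y/\ep, \om)$ to $y + tD$ does \emph{not} imply the pointwise inclusion $x \in \ep\cR_{t/\ep}(y/\ep, \om)$ required by the Lax-Oleinik infimum: set convergence only says that points in one set are close to points in the other. The time-shrinkage and interior-perturbation devices above circumvent this by trading a small amount $\eta$ of time for an $\ol B_{\eta\alpha/\ep}$ collar around the earlier reachable set (absorbing the Hausdorff error) and by pushing $y^*$ slightly toward $x$ so that $x - \tilde y_\ep$ lies strictly inside $(t_\ep - \eta)D$; both tricks rely essentially on the positivity $\alpha > 0$ and on the uniform-in-$y$ strengthening of Theorem \ref{thm:STO-R0} inherited via spatial periodicity.
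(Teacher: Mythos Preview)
Your argument is correct and complete. The route differs from the paper's, though. The paper does not split into $\limsup/\liminf$ along moving points; instead it passes to the ``inverse'' sets $K_\ep(x,t,\om):=\{y:\,x\in \ep\cR_{t/\ep}(y/\ep,\om)\}$ and proves directly that $K_\ep(x,t,\om)\to K(x,t):=x-tD$ in the Hausdorff metric, uniformly on $B_R\times[0,T]$. The sandwich
\[
x-\ep\cR_{\frac t\ep-\ell}(Y)(\tau_\ell\om)\ \subset\ K_\ep(x,t,\om)\ \subset\ x-\ep\cR_{\frac t\ep}(Y)(\om)+\ep Y,
\]
obtained from the inclusions $\cR_{t-\ell}(Y)(\tau_\ell\om)\subset\cR_t(x)(\om)\subset\cR_t(Y)(\om)$, reduces everything to the large-time average of the \emph{enlarged} reachable set $\cR_t(Y)$; convergence of the infima then follows immediately from uniform continuity of $u_0$. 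Your approach stays with the per-point reachable sets $\cR_t(x)$ and converts Hausdorff closeness into the pointwise membership $x_\ep\in\ep\cR_{t_\ep/\ep}(\tilde y_\ep/\ep,\om)$ via the $\eta\alpha$-collar and interior-perturbation device; this is a clean half-relaxed-limit argument that avoids introducing the auxiliary sets $K_\ep$, at the cost of the small extra perturbation step. Either route works; the paper's is a bit shorter because Hausdorff convergence of $K_\ep$ dispenses with the membership-versus-closeness issue in one stroke, while yours is perhaps more transparent about where the positivity $\alpha>0$ enters.
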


%%%%%%%%%%%%%%%%%%%%%%%%%%%%%%%%%%%%%%%%%%%%%%%%%%%%%%%%%%%%%%%%
\section{Large time average of the enlarged reachable sets}
\label{sec:RY}

%%%%%%%%%%%%%%%%
% Properties of admissible paths and reachable sets
\subsection*{Some properties of the reachable sets and the admissible paths} We investigate, using  {\upshape(A 0), 
% to obtain the Thanks to the spatial periodicity and the temporal stationary ergodicity of the velocity $a$, 
the behavior of the the reachable set $\cR_s(x,t)(\om)$, when $x$ and $t$ are translated, as well as its subadditivity properties.

\begin{lem}\label{lem:R_rand} Assume {\upshape(A)}. For any $x \in \R^n$, $t \ge 0$, $k \in \N$ and $\omega \in \Omega$,
\begin{equation}\label{Rx_rand}
\cR_t(x)(\omega) = [x] + \cR_t(\hat{x})(\omega)
\end{equation}
and
\begin{equation}\label{Rt_rand}
\cR_{k+t}(x,k)(\omega) = \cR_t(x)(\tau_k \omega).
\end{equation}
\end{lem}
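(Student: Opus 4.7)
\medskip

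The plan is to prove both identities by mutual inclusion, constructing explicit bijections between admissible paths via a spatial translation for \eqref{Rx_rand} and a time shift for \eqref{Rt_rand}. The two parts use the two separate pieces of assumption (A1): specializing $k=0$ in (A1) gives the pure spatial $\Z^n$-periodicity
\[
a(x+l,t,\omega) = a(x,t,\omega) \qquad \text{for all } l \in \Z^n,
\]
and specializing $l=0$ in (A1) gives the pure temporal stationarity
\[
a(x,t+k,\omega) = a(x,t,\tau_k \omega) \qquad \text{for all } k \in \Z.
\]
Both identities will follow from checking that these two invariances preserve the bound $|\gamma'(r)| \le a(\gamma(r),r,\cdot)$ defining $\cA_{s,t}$.

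For \eqref{Rx_rand}, I would start with $y \in \cR_t(x)(\omega)$ and choose $\gamma \in \cA_{0,t}(\omega)$ with $\gamma(0)=x$, $\gamma(t)=y$. Define $\widetilde\gamma(r) := \gamma(r) - [x]$. Then $\widetilde\gamma(0)=\hat{x}$, $\widetilde\gamma(t)=y-[x]$, and by spatial periodicity of $a$,
\[
|\widetilde\gamma'(r)| = |\gamma'(r)| \le a(\gamma(r),r,\omega) = a(\widetilde\gamma(r)+[x],r,\omega) = a(\widetilde\gamma(r),r,\omega),
\]
so $\widetilde\gamma \in \cA_{0,t}(\omega)$ and hence $y-[x] \in \cR_t(\hat x)(\omega)$. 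This gives $\cR_t(x)(\omega) \subset [x] + \cR_t(\hat x)(\omega)$. The reverse inclusion is produced by the same argument applied to $\gamma(r) := \widetilde\gamma(r) + [x]$ starting from a path in $\cA_{0,t}(\omega)$ with $\widetilde\gamma(0)=\hat{x}$.

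For \eqref{Rt_rand}, I would take $y \in \cR_{k+t}(x,k)(\omega)$ and pick $\gamma \in \cA_{k,k+t}(\omega)$ with $\gamma(k)=x$, $\gamma(k+t)=y$, and set $\widetilde\gamma(r):=\gamma(r+k)$ on $[0,t]$. Then $\widetilde\gamma(0)=x$, $\widetilde\gamma(t)=y$, and the temporal stationarity gives
\[
|\widetilde\gamma'(r)| = |\gamma'(r+k)| \le a(\gamma(r+k),r+k,\omega) = a(\widetilde\gamma(r),r,\tau_k\omega),
\]
so $\widetilde\gamma \in \cA_{0,t}(\tau_k\omega)$ and $y \in \cR_t(x)(\tau_k\omega)$. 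The converse inclusion follows by setting $\gamma(r):=\widetilde\gamma(r-k)$ for a path $\widetilde\gamma \in \cA_{0,t}(\tau_k\omega)$, and again invoking the identity $a(\cdot,r,\tau_k\omega)=a(\cdot,r+k,\omega)$.

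There is no real obstacle here: the content of the lemma is precisely that the two invariance properties packaged into (A1) translate into the corresponding invariances of the reachable sets. The only point worth being careful about is using the correct projection of (A1) in each case (setting $k=0$ for spatial periodicity in part one, and $l=0$ for temporal stationarity in part two), and confirming that the translated/shifted curves remain absolutely continuous on their new domains, which is immediate since translation and time-shift preserve absolute continuity.
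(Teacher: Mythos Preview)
Your proof is correct and follows exactly the same approach as the paper: translate paths by $-[x]$ for \eqref{Rx_rand} and shift by $-k$ in time for \eqref{Rt_rand}, using spatial periodicity and temporal stationarity from (A1) respectively to verify admissibility, then note that the reverse inclusions follow by the inverse transformations. Your write-up is in fact slightly more detailed than the paper's, which simply says ``the other direction of inclusion follows similarly.''
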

\begin{proof}  For any  $y \in \cR_t(x)(\om)$, choose $\gam \in \cA_{0,t}(\om)$ satisfying $\gam(0)=x$ and $\gam(t) = y$, and define $\tilde{\gam} : [0,t] \to \R^n$ by $\tilde{\gam}(\cdot) = \gam(\cdot) - [x]$. The periodicity in space of $a$ yields $\tilde{\gam} \in \cA_{0,t}(\om)$. Moreover, $\tilde{\gam}(0) = \hat{x}$, $\tilde{\gam}(t) = y - [x]$ and, hence,  $y-[x] \in \cR_t(\hat{x})(\om)$ and thus \eqref{Rx_rand} follows. %Since $y$ is arbitrary, it follows that $\cR_t(x)(\om) \subset [x] + \cR_t(\hat{x})(\om)$. 
The other direction of inclusion in \eqref{Rx_rand} follows similarly.

To prove \eqref{Rt_rand}, for any $y \in \cR_{k+t}(x,k)(\omega)$, choose $\gam \in \cA_{k,k+t}(\omega)$ such that $\gam(k) = x$ and $\gam(k+t) = y$ and define $\tilde{\gam} : [0,t] \to \R^n$ by $\tilde \gam(\cdot) = \gam(\cdot + k)$. Then, f $\tilde{\gam}(0) = x$ and $\tilde{\gam}(t) = y$ and, for a.e.  $r \in (0,t)$, 
$$
|\tilde{\gam}'(r)| = |\gam'(k+r)| \le a(\gam(k+r),k+r,\omega) = a(\tilde{\gam}(r),r,\tau_k \omega).
$$
It follows that $\tilde{\gam} \in \cA_{0,t}(\tau_k \omega)$,  $y \in \cR_t(x)(\tau_k \omega)$, and, hence,  $\cR_{k+t}(x,k)(\omega) \subset \cR_t(x)(\tau_k \omega)$. The other inclusion follows in  the same way.
\end{proof}

\begin{lem}\label{lem:STO-subadd}
Assume {\upshape(A)}. Then, for any $t \in \R$, $s \in \N$ such that $t \geq s$ and  $\om \in \Om$,
\begin{equation*}
\cR_t(Y)(\omega) \subset \cR_s(Y)(\omega)+\cR_{t-s}(Y)(\tau_s \omega)+\tilde Y.
\end{equation*}
\end{lem}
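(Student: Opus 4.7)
The plan is a path-splitting argument combined with the two translation identities from Lemma \ref{lem:R_rand}. Fix $\om \in \Om$, $s \in \N$ and $t \ge s$, and take an arbitrary $y \in \cR_t(Y)(\om)$. By definition there exist $x \in Y$ and $\gam \in \cA_{0,t}(\om)$ with $\gam(0) = x$ and $\gam(t) = y$. I would cut the path at time $s$ and set $z := \gam(s)$. Then $\gam\big|_{[0,s]}$ witnesses $z \in \cR_s(x)(\om) \subset \cR_s(Y)(\om)$, while $\gam\big|_{[s,t]} \in \cA_{s,t}(\om)$ witnesses $y \in \cR_t(z,s)(\om)$.

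The next step is to rewrite $\cR_t(z,s)(\om)$ in terms of reachable sets starting at time $0$ (so that the enlargement $\cR_{t-s}(Y)$ can appear). Since $s$ is a nonnegative integer, the stationarity identity \eqref{Rt_rand} gives
$$\cR_t(z,s)(\om) \;=\; \cR_{t-s}(z)(\tau_s \om),$$
and then the spatial periodicity identity \eqref{Rx_rand} peels off the integer part of $z$:
$$\cR_{t-s}(z)(\tau_s \om) \;=\; [z] + \cR_{t-s}(\hat z)(\tau_s \om) \;\subset\; [z] + \cR_{t-s}(Y)(\tau_s \om),$$
where the inclusion uses $\hat z \in [0,1)^n \subset Y$ together with the monotonicity of the enlarged reachable set in its initial spatial argument.

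Finally, I would write $[z] = z - \hat z$ and observe that $-\hat z \in -[0,1)^n \subset \tilde Y$, so that
$$y \;\in\; z + (-\hat z) + \cR_{t-s}(Y)(\tau_s \om) \;\subset\; \cR_s(Y)(\om) + \tilde Y + \cR_{t-s}(Y)(\tau_s \om),$$
which is the claimed inclusion. I do not expect a genuine obstacle here: the only subtle point is ensuring $s \in \N$, which is needed both so that $\tau_s$ is defined and so that the $\Z^n$-periodicity of $a$ in $x$ survives the shift of the time variable by $s$ (since the concatenated path $\widetilde\gam(\cdot) := \gam(\cdot + s) - [z]$ must remain admissible for $a(\cdot,\cdot,\tau_s\om)$). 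Both facts were already recorded in Lemma \ref{lem:R_rand}, so the proof reduces to the chain of inclusions above.
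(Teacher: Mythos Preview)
Your proposal is correct and is essentially the same argument as the paper's: split the admissible path at time $s$, set $z=\gam(s)$, use \eqref{Rt_rand} and \eqref{Rx_rand} to rewrite $y\in\cR_t(z,s)(\om)$ as $y\in[z]+\cR_{t-s}(Y)(\tau_s\om)$, and then decompose $[z]=z-\hat z$ with $z\in\cR_s(Y)(\om)$ and $-\hat z\in\tilde Y$. The paper just packages the same three-term decomposition into a single displayed chain rather than invoking the lemma identities by name.
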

\begin{proof}
For  $y\in \cR_t(Y)(\omega)$, choose $\gam \in \cA_{0,t}(\omega)$ such that $\gam(0)\in Y$ and $\gam(t)=y$. Then, in light of \eqref{Rx_rand} and \eqref{Rt_rand}, we get
\begin{align*}
y=\gam(t)&=\gam(s)+(\gam(t)-[\gam(s)])+([\gam(s)]-\gam(s))\\[1mm]
 &\in \cR_s(Y)(\omega)+\left(\cR_{t}(\gam(s),s)(\omega)-[\gam(s)]\right)+\tilde Y\\[1mm]
 &= \cR_s(Y)(\omega)+\left(\cR_{t-s}(\gam(s))(\tau_s \omega) -[\gam(s)]\right) +\tilde Y\\[1mm]
 &= \cR_s(Y)(\omega)+\cR_{t-s}(\widehat{\gam(s)})(\tau_s \omega) +\tilde Y\\[1mm]
 &\subset \cR_s(Y)(\omega)+\cR_{t-s}(Y)(\tau_s \omega)+\tilde Y,
\end{align*}
which is the desired conclusion.
\end{proof}

%Since the following claim is immediate, we omit its proof.
Since taking the closed convex hull is a linear operation and $\tilde Y$ is convex and compact, it follows from Lemma \ref{lem:STO-subadd} that
%\begin{cor}\label{cor:coR_subadd} Assume {\upshape(A)}. Then, for any $m, k \in \N$ such that $k\le m$ and  $\om \in \Om$,
\begin{equation}\label{coR_subadd}
\co \cR_m(Y)(\om) \subset \co \cR_k(Y)(\om) + \co \cR_{m-k}(Y)(\tau_k \om)+\tilde Y.
\end{equation}
%\end{cor}

In the analysis that follows we will need to construct admissible curves connecting any two points within a uniform time.  This is the topic of the following lemma. 
%In the following lemma, for any two points $y_1, y_2 \in Y$, we construct a special admissible path $\gam_{y_1,y_2}$ connecting these two points within time $\ell : = [\sqrt{n}/\alpha] + 1$. This construction is helpful in our analysis in the sequel, and we will call $\gam_{y_1,y_2}$ a bridge connecting $y_1$ to $y_2$.

\begin{lem}\label{lem:ODE} Assume {\upshape(A)} and let $\ell : = [\sqrt{n}/\alpha] + 1$. For each $\om \in \Om$ and any $y_1, y_2 \in Y$, there exists $\gam_{y_1,y_2} \in \cA_{0,\ell}(\om)$ such that $\gam_{y_1,y_2}(0) = y_1$ and $\gam_{y_1,y_2}(\ell) = y_2$.
\end{lem}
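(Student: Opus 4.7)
The plan is to build $\gam_{y_1,y_2}$ by concatenating two segments: first a straight line traveled at the minimum admissible speed $\al$, then a constant segment that waits at $y_2$ for the remaining time. Since $y_1, y_2 \in Y = [0,1]^n$ we have $|y_1 - y_2| \le \sqrt{n}$, so the straight-line portion takes time at most $\sqrt n/\al$, which by the choice $\ell = [\sqrt n /\al] + 1$ is strictly less than $\ell$; thus the waiting segment has nonnegative length and the construction fits within $[0,\ell]$.

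Concretely, if $y_1 = y_2$ simply set $\gam_{y_1,y_2}(r) \equiv y_1$ on $[0,\ell]$, which satisfies $|\gam'| = 0 \le a$ almost everywhere and lies in $\cA_{0,\ell}(\om)$. If $y_1 \neq y_2$, set $\tau := |y_2 - y_1|/\al \in (0, \ell]$ and define
\begin{equation*}
\gam_{y_1,y_2}(r) :=
\begin{cases}
y_1 + r\,\al\,\dfrac{y_2 - y_1}{|y_2 - y_1|} & \text{for } r \in [0,\tau],\\[2mm]
y_2 & \text{for } r \in [\tau,\ell].
\end{cases}
\end{equation*}
Clearly $\gam_{y_1,y_2}(0) = y_1$ and $\gam_{y_1,y_2}(\ell) = y_2$, and $\gam_{y_1,y_2}$ is Lipschitz on $[0,\ell]$ with $|\gam'_{y_1,y_2}(r)| = \al$ for a.e. $r \in (0,\tau)$ and $|\gam'_{y_1,y_2}(r)| = 0$ for a.e. $r \in (\tau,\ell)$.

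The final step is to verify admissibility using the lower bound from assumption (A2). For a.e. $r \in [0,\ell]$ we have $|\gam'_{y_1,y_2}(r)| \le \al \le a(\gam_{y_1,y_2}(r),r,\om)$, so $\gam_{y_1,y_2} \in \cA_{0,\ell}(\om)$ by the definition \eqref{def:A-Om}. There is essentially no obstacle here: the lemma is a direct consequence of the uniform positive lower bound on $a$ together with the fact that $Y$ has diameter $\sqrt n$, and the integer $\ell = [\sqrt n/\al]+1$ is precisely chosen so that the slowest admissible speed suffices to traverse $Y$ within time $\ell$.
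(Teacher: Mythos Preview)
Your proof is correct and follows essentially the same approach as the paper: split into the trivial case $y_1=y_2$ and otherwise travel along the straight segment from $y_1$ to $y_2$ at constant speed $\al$, then wait at $y_2$. If anything, your write-up is slightly more explicit in checking admissibility via the lower bound in (A2), which the paper leaves as ``immediate.''
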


\begin{proof} If $y_1 = y_2$, the path $\gam_{y_1,y_2}(\cdot) \equiv y_1$ yields the desired result. Hence, we assume $y_1 \ne y_2$ and note that $|y_2 - y_1| \le \sqrt{n}$ and thus $|y_2-y_1|/\alpha \le \ell$.  It is immediate that the path satisfies the claim
\begin{equation}
\gam_{y_1,y_2}(t) = \begin{cases}
\displaystyle y_1 +  \frac{t\alpha (y_2 - y_1)}{|y_2 - y_1|}, \quad & t \in [0, |y_2-y_1|/\alpha],\\
y_2, \quad & t \in (|y_2-y_1|/\alpha, \ell],
\end{cases}
\end{equation}
satisfies the claim.
%Basic calculation shows that $|\gam_{y_1,y_2}'(s)| \leq \alpha \le a(\gam_{y_1,y_2},s,\om)$ for a.e. $s \in [0,\ell]$. Hence, $\gam_{y_1,y_2}$ belongs to $%\cA_{0,\ell}(\om)$ and gives the desired results.
\end{proof}

%%%%%%%%%%%%%%%%
\subsection*{The large time average of the enlarged reachable set}

We prove the following theorem which identifies the large time average of the enlarged reachable set.
\begin{thm}\label{thm:STO-R} Assume {\upshape(A)}. There exist a compact and convex set $D\subset \R^n$ and an event $\widetilde\Om \in \cF$ of full probability such that, for any $\om \in \widetilde\Omega$,
\begin{equation}
\quad \lim_{t\to \infty} \frac{\cR_t(Y)(\omega)}{t} = D  \quad \text{in} \quad (\srC,\rho).
\label{lim:STO-R}
\end{equation}
\end{thm}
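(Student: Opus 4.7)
My plan is to apply the subadditive ergodic theorem (Theorem \ref{thm:subadd}) to a convexification of $\{\cR_m(Y)(\om)\}_{m\in\N}$, extract a candidate limit $D$, and then exploit the controlled dynamics together with the spatial periodicity and temporal stationarity to remove the convex hull.

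For integers $0\le k<m$, set $X_{k,m}(\om):=\co\cR_{m-k}(Y)(\tau_k\om)+\tilde Y$. Stationarity $X_{k+l,m+l}(\om)=X_{k,m}(\tau_l\om)$ is immediate. Lemma \ref{lem:STO-subadd} and the linearity of $\co$ give \eqref{coR_subadd}; adding one copy of $\tilde Y$ to both sides and regrouping produces the genuine subadditivity $X_{k,m}\subset X_{k,l}+X_{l,m}$, and integrability of $\|X_{0,1}\|$ comes from Lemma \ref{lem:Rab}. Theorem \ref{thm:subadd} now yields a compact convex $D\subset\R^n$ and a full-measure, translation-invariant event $\widetilde\Om\in\cF$ such that $\co\cR_m(Y)(\om)/m\to D$ in $(\srC,\rho)$ for every $\om\in\widetilde\Om$, and likewise along each shifted sequence $\co\cR_m(Y)(\tau_l\om)/m$. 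Since $\cR_m(Y)\subset\co\cR_m(Y)$, the upper Hausdorff half of \eqref{lim:STO-R} follows immediately: $\sup_{z\in\cR_m(Y)(\om)/m}\dist(z,D)\to 0$.

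The heart of the argument is the matching lower half, namely that for each $y\in D$ one can construct $y_m\in\cR_m(Y)(\om)$ with $y_m/m\to y$. Fix $y$ and $\ep>0$. By Carathéodory applied to the convex-hull convergence at each shift $\tau_{jk}\om$, one writes $y$, up to error $\ep$, as $\sum_{i=1}^{n+1}\lambda_i^j z_i^{k,j}/k$, where the points $z_i^{k,j}$ are extreme points of $\co\cR_k(Y)(\tau_{jk}\om)$ and hence lie in $\cR_k(Y)(\tau_{jk}\om)$; let $\gamma_i^{k,j}\in\cA_{0,k}(\tau_{jk}\om)$ be corresponding admissible paths. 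After a rational approximation $\lambda_i\approx p_i/q$ of the weights and fixing a large integer $M$, I splice an admissible path over a window of length $N\approx Mq(k+\ell)$, where $\ell$ is the constant of Lemma \ref{lem:ODE}. Each block of length $k+\ell$ consists of a short transition sub-path of length $\ell$ (produced by Lemma \ref{lem:ODE}) that repositions the state so that, modulo $\Z^n$, it coincides with the starting point $\gamma^{k,j}_{\tau(j)}(0)$ of the chosen type, followed by the integer spatial translate of $\gamma^{k,j}_{\tau(j)}$. Admissibility with respect to $\om$ is guaranteed exactly by the two structural properties recorded in Lemma \ref{lem:R_rand}: the $\Z^n$-periodicity of $a$ in $x$ preserves admissibility under integer spatial translations, while the stationarity of $a$ under the integer time shifts $\tau_k$ re-anchors the time variable at each block. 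Choosing $\tau(j)$ so that each type $i$ is executed $Mp_i$ times forces the endpoint of the spliced path to lie within $O(\ep+1/M+1/k)$ of $Ny$, producing $y_N\in\cR_N(Y)(\om)$ with $y_N/N\to y$. Monotonicity of $\cR_m(Y)$ in $m$ (Lemma \ref{lem:monot}) then extends the convergence to all integers $m\to\infty$, and the sandwich $\cR_{\lfloor t\rfloor}(Y)(\om)\subset\cR_t(Y)(\om)\subset\cR_{\lfloor t\rfloor+1}(Y)(\om)$ upgrades it to arbitrary $t\to\infty$.

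The main obstacle is the splicing just described: because the spliced path must be admissible for the actual environment $\om$ on every sub-window, the types $z_i^{k,j}$ and weights $\lambda_i^j$ must be chosen adaptively using the shifted environments $\tau_{jk}\om$ rather than $\om$ itself. The translation-invariance of $\widetilde\Om$ built into Step~1 is precisely what makes $\co\cR_k(Y)(\tau_{jk}\om)/k$ approximate $D$ uniformly well in $j$ for large $k$, so that the local Carathéodory decomposition can be performed at every block while the prescribed frequencies of each type remain consistent across the $M$ blocks.
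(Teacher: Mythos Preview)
Your overall architecture---apply Theorem~\ref{thm:subadd} to $X_{k,m}=\co\cR_{m-k}(Y)(\tau_k\om)+\tilde Y$, get the upper half of the Hausdorff convergence for free, and then manufacture the lower half by splicing admissible sub-paths using Lemma~\ref{lem:ODE} and Lemma~\ref{lem:R_rand}---matches the paper's proof. The extreme-point trick ($\cE(\co A)\subset A$ for compact $A$) is also the same device the paper uses. So the strategy is right.

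There is, however, a genuine gap at the point you flag as ``the main obstacle.'' You assert that the translation-invariance of $\widetilde\Om$ is ``precisely what makes $\co\cR_k(Y)(\tau_{jk}\om)/k$ approximate $D$ uniformly well in $j$ for large $k$.'' It is not. Translation invariance of $\widetilde\Om$ only guarantees that, for each \emph{fixed} $l\in\Z$, $\co\cR_m(Y)(\tau_l\om)/m\to D$ as $m\to\infty$. What your splicing needs is convergence along the \emph{diagonal} shifts $\tau_{s(k+\ell)}\om$, with the shift growing linearly in the block length $k$, for each $s=0,1,\ldots,q-1$ (your $Mq-1$). Almost-sure convergence does not automatically survive passage to such diagonal sequences: for a single $\om$ there is no reason the environment seen in block $s$ at scale $k$ resembles the environment at scale $k$ without shift.

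The paper isolates exactly this point as Lemma~\ref{lem:STO-coR} and proves it by a separate argument: Egoroff's theorem produces a large set $W$ on which the convergence in Theorem~\ref{thm:XR} is uniform, and then Birkhoff's ergodic theorem applied to $\mathbf 1_W$ shows that, for $\bP$-a.e.\ $\om$, the orbit $(\tau_j\om)_j$ visits $W$ with such high frequency that one can find an integer $\tilde r$ with $\tau_{\tilde r}\om\in W$ and $|\tilde r-sm|$ of order $\ep m$; the gap is then absorbed using Lemma~\ref{lem:STO-subadd} and Lemma~\ref{lem:monot}. This is what actually defines the event $\widetilde\Om$, and it is strictly smaller than the naive $\bigcap_{l\in\Z}\tau_l\Om_0$ you construct. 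Without this lemma (or an equivalent), your spliced path cannot be guaranteed to have sub-paths in $\cR_k(Y)(\tau_{s(k+\ell)}\om)$ whose rescaled endpoints are near the prescribed extreme points of $D$, and the construction collapses. Your ``adaptive'' language in the last paragraph does not circumvent this: to decompose $y$ via Carath\'eodory at block $j$ you still need $y$ to lie (up to $\ep$) in $\co\cR_k(Y)(\tau_{j(k+\ell)}\om)/k$, which is the same diagonal-shift statement.

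Two smaller points. First, the shifts in your sub-paths should be $\tau_{j(k+\ell)}\om$, not $\tau_{jk}\om$, since each block consumes $k+\ell$ units of time (path plus bridge). Second, the parameter $M$ is redundant once $q$ is chosen from the rational approximation; the paper takes exactly $q$ blocks and sends the block length $m\to\infty$.
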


Following the strategy outlined in the Introduction, we first need identify the long time behavior of the convex hull $\co \cR_m(Y)(\om)$ of the reachable set $\cR_m(Y)(\om)$, along integer time $m \in \N$. To this end,  consider the family  $X := \left(X_{m,k}(\om)\right)_{0\le m < k}  \subset \co \srC$ given by % $X = \left(X_{m,k}(\om)\right)_{0\le m < k}$, $m, k \in \N$, defined by
\begin{equation}\label{def:Xmk}
X_{m,k}(\om) := \co \cR_{k-m}(Y)(\tau_m \om) + \tilde{Y} = \co (\cR_{k-m}(Y)(\tau_m \om) + \tilde{Y}).
\end{equation}
%In view of the compactness of $\cR_t(Y)(\om)$, $X$ is a family in $\co \srC$.

\begin{thm}\label{thm:XR} 
Assume {\upshape(A)}. There exist a compact and convex set $D\subset \R^n$ and an event of full probability $\Omega_0\subseteq\Omega$ such that, for any $\om \in \Omega_0$,
\begin{equation}
\lim_{m\to \infty} \frac{X_{0,m}(\om)}{m} = D \qquad \text{in} \quad (\srC,\rho),\label{lim:X}
\end{equation}
and
\begin{equation}
\lim_{m\to \infty} \frac{\co \cR_m(Y)(\om)}{m} = D  \qquad \text{in} \quad (\srC,\rho).\label{lim:coR}
\end{equation}
Moreover, 
\begin{equation}\label{Dab}
\ol{B}_\al \subset D \subset \ol{B}_\beta.
\end{equation}
\end{thm}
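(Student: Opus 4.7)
The plan is to apply the subadditive ergodic theorem (Theorem \ref{thm:subadd}) to the family $(X_{m,k})_{0\le m < k}$ introduced in \eqref{def:Xmk}, transfer the resulting Hausdorff limit from $m^{-1}X_{0,m}(\om)$ to $m^{-1}\co\cR_m(Y)(\om)$ using that the offset $\tilde Y/m$ vanishes, and finally pinch the limit $D$ between $\ol B_\al$ and $\ol B_\beta$ by passing to the limit in Lemma \ref{lem:Rab}.

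First I would verify the three hypotheses of Theorem \ref{thm:subadd} for $X$. That $X_{m,k}(\om)\in\co\srC$ is clear; stationarity $X_{m+l,k+l}(\om)=X_{m,k}(\tau_l\om)$ is immediate from the group identity $\tau_{m+l}=\tau_m\circ\tau_l$; and integrability is a deterministic bound, since Lemma \ref{lem:Rab} gives $\cR_1(Y)(\om)\subset \ol B_\beta + Y$, so $\|X_{0,1}(\om)\|\le \beta+2\sqrt n$. The one point requiring care is subadditivity. Applying \eqref{coR_subadd} at $\tau_m\om$ with splitting time $l-m$ yields
\[
\co\cR_{k-m}(Y)(\tau_m\om)\subset \co\cR_{l-m}(Y)(\tau_m\om)+\co\cR_{k-l}(Y)(\tau_l\om)+\tilde Y.
\]
Adding $\tilde Y$ to both sides and splitting the two resulting copies of $\tilde Y$ on the right via the convexity identity $\tilde Y+\tilde Y=2\tilde Y$, one gets exactly $X_{m,k}(\om)\subset X_{m,l}(\om)+X_{l,k}(\om)$. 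This is precisely why the shift by $\tilde Y$ is built into the definition of $X_{m,k}$: it absorbs the extra $\tilde Y$ from Lemma \ref{lem:STO-subadd} and allows the family to be fit into the subadditive framework, which is the main (and really only) technical obstacle.

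Theorem \ref{thm:subadd} then produces a compact convex set $D\subset\R^n$ and an event $\Omega_0\in\cF$ of full probability on which \eqref{lim:X} holds. To pass from \eqref{lim:X} to \eqref{lim:coR}, I would use that
\[
\tfrac{1}{m}X_{0,m}(\om)=\tfrac{1}{m}\co\cR_m(Y)(\om)+\tfrac{1}{m}\tilde Y,
\]
together with the elementary fact that $\rho(A,A+C)\le \|C\|$ whenever $0\in C$ (the inclusion $A\subset A+C$ gives one side, and any $a+c\in A+C$ is at distance $\le|c|$ from $a\in A$); since $0\in\tilde Y$, this yields
\[
\rho\bigl(\tfrac{1}{m}\co\cR_m(Y)(\om),\,\tfrac{1}{m}X_{0,m}(\om)\bigr)\le \sqrt n/m\to 0,
\]
so the two sequences share the same Hausdorff limit $D$.

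Finally, for \eqref{Dab}, Lemma \ref{lem:Rab} gives $\ol B_{m\al}\subset \cR_m(Y)(\om)\subset \ol B_{m\beta}+Y$ for every $m$ and $\om$; taking convex hulls (both outer sets are already convex) and dividing by $m$ produces
\[
\ol B_\al\subset \tfrac{1}{m}\co\cR_m(Y)(\om)\subset \ol B_\beta + \tfrac{1}{m}Y.
\]
Since the outer bound $\ol B_\beta+m^{-1}Y$ converges to $\ol B_\beta$ in $(\srC,\rho)$ and the inner bound is $m$-independent, passing to the Hausdorff limit through \eqref{lim:coR} yields $\ol B_\al\subset D\subset \ol B_\beta$, completing the proof.
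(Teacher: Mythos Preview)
Your proof is correct and follows essentially the same route as the paper's: verify that $(X_{m,k})$ is stationary, subadditive and has $\E\|X_{0,1}\|<\infty$, apply Theorem~\ref{thm:subadd}, then use $\rho(A,A+B)\le\|B\|$ with $B=\tilde Y/m$ to pass from \eqref{lim:X} to \eqref{lim:coR}, and read off \eqref{Dab} from Lemma~\ref{lem:Rab}. Your treatment is in fact more explicit than the paper's on two points the paper leaves as ``immediate'': the subadditivity of $X$ (where you correctly observe that the extra $\tilde Y$ from \eqref{coR_subadd}, together with the one added to both sides, is exactly what is needed to form $X_{m,l}+X_{l,k}$), and the derivation of the sandwich $\ol B_\al\subset D\subset\ol B_\beta$.
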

\begin{proof} It is immediate from \eqref{def:Xmk} and %The family $X$ defined in \eqref{def:Xmk} is obviously stationary, and Corollary~\ref{cor:coR_subadd}
\eqref{coR_subadd} that  shows that $X$ is stationary and  subadditive. Moreover, in view of Lemma \ref{lem:Rab}, $\E \|X_{0,1}\|$ is finite. It follows from  Theorem \ref{thm:subadd} that there exist a convex set $D \in \srC$ and a set of full probability $\Om_0 \subseteq \Om$ such  that, for every $\om \in \Om_0$, \eqref{lim:X} holds. Upon redefining $\Om_0$ as $\bigcap_{j \in \Z} \tau_j \Om_0$, we may assume that $\Om_0$ is invariant under integral translations and has full measure. Note that \eqref{lim:X} holds for all $\om \in \Om_0$.

To prove \eqref{lim:coR} we fix $\om \in \Omega_0$ and  for simplicity,  we omit the dependence of $\cR_m(Y)$ on it. Since
\begin{equation*}
\rho \left(\frac{\co \cR_m(Y)}{m},D\right) \le \rho \left(\frac{\co \cR_m(Y)}{m}, \frac{\co \cR_m(Y) + \tilde{Y}}{m}\right) + \rho \left(\frac{\co \cR_m(Y) + \tilde{Y}}{m},D\right),
\end{equation*}
it is enough to show that the right hand side of the above inequality converges to $0$ as $m\to \infty$. Indeed,  in view of \eqref{lim:X}, the second term on the right hand side above approaches zero as $m \to \infty$, while,  the first term, in light of 
\begin{equation}
\rho(A,A+B) \le \|B\| \quad \text{for all} \quad A, B \in \srC,
\end{equation}
is bounded by $\|\tilde{Y}/m\|$ and, hence, also  converges to zero.

The last claim is immediate from  Lemma~\ref{lem:Rab} and  \eqref{lim:X} and \eqref{lim:coR}.     % Theorem \ref{thm:XR}
\end{proof}

%\begin{rem}\label{rem:Dab}
%In view of Lemma \ref{lem:Rab} and Theorem \ref{thm:XR}, one has
%\begin{equation}\label{Dab}
%\ol{B}_\al \subset D \subset \ol{B}_\beta.
%\end{equation}
%\end{rem}

In the following lemma and for future use we show that almost surely the convergence in \eqref{lim:coR} holds simultaneously  for a special family of translations of the realization. The proof  is technical, but the benefit of this lemma will be clear later. We recall that $\ell = [\sqrt{n}/\alpha] + 1$.

\begin{lem}\label{lem:STO-coR}
Assume {\upshape(A)}. Let $D$ be as in Theorem~\ref{thm:XR}. There exists an event $\widetilde\Om \subset \Om$ of full probability measure, such that for each $\omega\in\widetilde\Omega$ and any integer $s \geq 0$,
\begin{equation}
\lim_{m\to \infty} \frac{\co \cR_m(Y)(\tau_{s(m+\ell)}\om)}{m} = D \quad \text{in} \quad (\srC,\rho).\label{lim:STO-coR_trans}
\end{equation}
\end{lem}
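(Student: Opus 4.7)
My plan is to reduce via the identity $\tau_{s(m+\ell)}\omega = \tau_{sm}(\tau_{s\ell}\omega)$ together with the $\tau$-invariance of the full-probability event $\Omega_0$ from Theorem~\ref{thm:XR}: after relabeling $\omega \mapsto \tau_{s\ell}\omega$, it suffices to show that, for each fixed integer $s \ge 0$ and every $\omega$ in a full-probability event, $\co \cR_m(Y)(\tau_{sm}\omega)/m \to D$ in $(\srC,\rho)$ as $m \to \infty$. The case $s=0$ is exactly \eqref{lim:coR}, so I focus on $s \ge 1$; the $\widetilde\Omega$ of the statement can then be taken as the countable intersection over $s \ge 0$ of the corresponding good sets. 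I will establish Hausdorff convergence by pointwise control of the support function $h_A(p) := \sup_{x\in A} p \cdot x$ on a countable dense set of directions $p$, upgraded via the uniform boundedness from Lemma~\ref{lem:Rab}.

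For the lower Hausdorff inclusion $D \subset \liminf_m \co\cR_m(Y)(\tau_{sm}\omega)/m$, I apply Lemma~\ref{lem:STO-subadd} to total time $(s+1)m$ split at $sm$, pass to convex hulls, and use Minkowski additivity of $h$ to get
\[
h_{\co \cR_m(Y)(\tau_{sm}\omega)}(p) \;\ge\; h_{\co \cR_{(s+1)m}(Y)(\omega)}(p) - h_{\co \cR_{sm}(Y)(\omega)}(p) - h_{\tilde Y}(p).
\]
Dividing by $m$ and invoking Theorem~\ref{thm:XR} on the two normalized convex hulls on the right (both of which tend to $h_D(p)$ after the proper rescaling) yields
\[
\liminf_{m \to \infty} \frac{h_{\co \cR_m(Y)(\tau_{sm}\omega)}(p)}{m} \;\ge\; (s+1)\,h_D(p) - s\,h_D(p) \;=\; h_D(p).
\]

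The matching upper bound $\limsup_m h_{\co\cR_m(Y)(\tau_{sm}\omega)}(p)/m \le h_D(p)$ will be the main obstacle, since subadditivity alone yields only lower bounds on the shifted increment. My plan is to iterate Lemma~\ref{lem:STO-subadd} across $N$ consecutive blocks of length $m$, producing
\[
h_{\co \cR_{Nm}(Y)(\omega)}(p) \;\le\; \sum_{j=0}^{N-1} h_{\co \cR_m(Y)(\tau_{jm}\omega)}(p) + (N-1)\,h_{\tilde Y}(p),
\]
and then, letting $N \to \infty$ with $m$ fixed and $\omega \in \Omega_0$, to combine (i) the LHS convergence $h_{\co\cR_{Nm}(Y)(\omega)}(p)/(Nm) \to h_D(p)$ from Theorem~\ref{thm:XR}, (ii) the per-shift lower bound of the previous paragraph applied at each $j \in \{0,1,\ldots,N-1\}$, and (iii) Birkhoff's ergodic theorem applied to the measure-preserving transformation $\tau_m$ (with ergodic decomposition handling the possibly nontrivial $\tau_m$-invariant $\sigma$-algebra allowed by the joint $\tau$-ergodicity assumption). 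The Cesàro squeeze of the average against the pointwise lower bound should then force each shifted increment's normalized support function to converge individually to $h_D(p)$, which together with the lower bound closes the Hausdorff convergence and yields \eqref{lim:STO-coR_trans}.
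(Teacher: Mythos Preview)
Your reduction to $\tau_{sm}\omega$ via the $\tau$-invariance of $\Omega_0$ and your lower bound are both correct: the support-function inequality coming from Lemma~\ref{lem:STO-subadd} applied to the split $(s+1)m = sm + m$ indeed gives $\liminf_m m^{-1}h_{\co\cR_m(Y)(\tau_{sm}\omega)}(p)\ge (s+1)h_D(p)-sh_D(p)=h_D(p)$ for every $\omega\in\Omega_0$, and this half is cleaner than anything in the paper.

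The upper bound, however, has a genuine gap. Write $g_m(\omega):=m^{-1}h_{\co\cR_m(Y)(\omega)}(p)$. Your iterated subadditivity yields only the \emph{lower} bound $\frac1N\sum_{j<N} g_m(\tau_{jm}\omega)\ge g_{Nm}(\omega)-O(1/m)$ on the Ces\`aro average; to squeeze the single term $g_m(\tau_{sm}\omega)$ down to $h_D(p)$ you would need an \emph{upper} bound of size $h_D(p)$ on that average, which you never produce. Birkhoff for the transformation $\tau_m$ describes the $N\to\infty$ behaviour at \emph{fixed} $m$ and returns a conditional expectation with respect to the $\tau_m$-invariant $\sigma$-algebra, whereas your target is $m\to\infty$ at fixed $j=s$; these two limits do not interact in the way a squeeze would require. (Since $g_m(\tau_{sm}\cdot)$ and $g_m$ are equidistributed you do get $L^1$ convergence to $h_D(p)$, but that does not upgrade to a.s.\ convergence.) The paper obtains the upper bound by an entirely different mechanism: Egoroff's theorem produces a set $W_{s,\delta}$ of measure $>1-\delta/(4s\beta)$ on which the convergence of Theorem~\ref{thm:XR} is uniform, the ergodic theorem applied to ${\bf 1}_{W_{s,\delta}}$ then guarantees an integer $\tilde r$ with $0\le sm-\tilde r\le \ep m/(2\beta)$ and $\tau_{\tilde r}\omega\in W_{s,\delta}$, and finally subadditivity and monotonicity compare $\co\cR_m(Y)(\tau_{s(m+\ell)}\omega)$ with $\co\cR_m(Y)(\tau_{\tilde r+s\ell}\omega)$. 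Your support-function strategy can in fact be salvaged, but the missing ingredient is Lemma~\ref{lem:ODE} rather than Birkhoff: for any $z_0\in\cR_{sm-\ell}(Y)(\omega)$ the bridge construction gives $[z_0]+Y\subset\cR_{sm}(Y)(\omega)$ and hence $[z_0]+\cR_m(Y)(\tau_{sm}\omega)\subset\cR_{(s+1)m}(Y)(\omega)$; maximizing $p\cdot[z_0]$ over such $z_0$ then yields
\[
h_{\co\cR_m(Y)(\tau_{sm}\omega)}(p)\ \le\ h_{\co\cR_{(s+1)m}(Y)(\omega)}(p)-h_{\co\cR_{sm-\ell}(Y)(\omega)}(p)+\sqrt n\,|p|,
\]
which after dividing by $m$ gives exactly the missing $\limsup\le h_D(p)$.
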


\begin{proof}
For each fixed $s \in \N$, we construct $\Om_s \in \cF$ with $\bP(\Om_s) = 1$ such that \eqref{lim:STO-coR_trans} holds for $s$. The conclusion then follows  once we define $\widetilde\Om := \bigcap_{s\in \N} \Om_s$.

For $s = 0$, let $\Om_0$ be as defined in Theorem \ref{thm:XR} and \eqref{lim:STO-coR_trans} follows. It remains to define $\Om_s$ for any fixed $s \ge 1$. In view of the pointwise convergence in $\Om_0$ and Egoroff's theorem, for any $\delta \in (0,1)$, there exists $W_{s,\delta} \subset \Om_1$ with $\bP(W_{s,\delta}) > 1-\frac{\delta}{4s\beta}$ and $M_{s,\delta} \in \N$ such that, if $m \ge M_{s,\delta}$, then
\begin{equation*}
\sup_{\om \in W_{s,\delta}} \; \rho \left(\frac{\co \cR_m(Y)(\tau_{s\ell}\om)}{m}, D\right) < \frac{\delta}{4}.
\end{equation*}

Applying the ergodic theorem to the indicator function ${\bf 1}_{W_{s,\delta}}$, we find  $\Om_{s,\delta} \in \cF$ with $\bP(\Om_{s,\delta}) = 1$ so that, for each $\om \in \Om_{s,\delta}$,
%\begin{equation}
%\lim_{r \to \infty} \frac{1}{r} \int_0^r {\bf 1}_{W_\delta}(\tau_z \om) dz \; = \; \bP(W_\delta) > 1 - \frac{\delta}{4s\beta}.
%\label{ergodic_cR}
%\end{equation}
\begin{equation}
\lim_{N \to \infty} \frac{1}{N+1} \sum_{k=0}^{N} {\bf 1}_{W_{s,\delta}}(\tau_k \om)  \; = \; \bP(W_{s,\delta}) > 1 - \frac{\delta}{4s\beta}.
\label{ergodic_cR}
\end{equation}

Let $\Om_s := \bigcap_{\delta \, \in \, \bQ \cap (0,1)} \Om_{s,\delta}$.
It is clear that $\Om_s \in \Om_1$ and $\bP(\Om_s) = 1$. We need to check that \eqref{lim:STO-coR_trans} holds for all $\om \in \Om_s$. Fix any such $\om \in \Om_s$ and observe that for $\ep > 0$, in view of \eqref{ergodic_cR},
 there exists $N_{s,\ep} > 0$ so that, for $N \ge N_{s,\ep}$,
\begin{equation}
\sum_{k=0}^{N} {\bf 1}_{W_{s,\ep}}(\tau_k \om)  = \text{Card}\left\{k \in [0,N] \; : \: k \in \N, \tau_k \om \in W_{s,\ep} \right\}  > \left(1 - \frac{\ep}{4s\beta} \right) (N+1).
\label{ergodic_cR1}
\end{equation}

Set $M_{s,\ep} := \max\{N_{s,\ep}/(2s), 8\sqrt{n}/\ep \}$. Then, for any $m \ge M_{s,\ep}$, the previous claim with $N = 2sm$ yields that, inside the set $\N_{\le 2sm} := \{0,1,\cdots,2sm\}$, there is no subset $\cJ = \{k, k+1, \cdots, k+[\frac{\ep m}{2\beta}]\}$, which consists of $[\frac{\ep m}{2\beta}] + 1$ consecutive integers such  that, if $j \in \cJ$, then $\tau_j \om$ fails to be in $W_{s,\ep}$. 

Consequently, there exists an integer $\tilde r \le sm$ with $sm - \tilde r \leq [\frac{\ep m}{2\beta}]$ such that $\tau_{\tilde r} \om \in W_{s,\ep}$. 

Next observe that
\begin{equation*}
\begin{aligned}
\rho \left(\frac{\co \cR_m(Y)(\tau_{s(m+\ell)} \om)}{m}, D\right) \le &\ \rho \left(\frac{\co \cR_m(Y)(\tau_{s(m+\ell)} \om)}{m}, \frac{\co \cR_m(Y)(\tau_{\tilde r}\circ \tau_{s\ell} \om)}{m}\right) \\
& \quad + \rho \left(\frac{\co \cR_m(Y)(\tau_{\tilde r}\circ \tau_{s\ell} \om)}{m}, D\right).
\end{aligned}
\end{equation*}

Since $\tau_{\tilde r}\om \in W_{s,\ep}$, the second term in the right hand side of the inequality above is bounded from above by $\ep/4$. To estimate the first term, we note that $0<  s(m+\ell) -\tilde{r} + s\ell \leq [\frac{\ep m}{2\beta}]$. 

%$\tilde{r} + s\ell < s(m+\ell)$ and their difference $sm - \tilde{r}$ is positive and no larger than $[\frac{\ep m}{2\beta}]$. 
In view of Lemma \ref{lem:STO-subadd}, we have
\begin{equation*}
\begin{aligned}
\cR_m(Y)(\tau_{\tilde r+s\ell} \om) \subset &\  \cR_{sm-\tilde r}(Y)(\tau_{\tilde r+s\ell}\om) + \cR_{m - ({sm-\tilde r})}(Y)(\tau_{s(m+\ell)}\om) + \tilde{Y}\\
\subset &\ \cR_{sm-\tilde r}(Y)(\tau_{\tilde r+s\ell}\om) + \cR_{m}(Y)(\tau_{s(m+\ell)}\om) + \tilde{Y}.
\end{aligned}
\end{equation*}

On the other hand, the stationarity %relation \eqref{Rt_rand} 
yields
$$
\cR_m(Y)(\tau_{s(m+\ell)} \om) = \cR_{m + sm - \tilde r} (Y,sm-\tilde r)(\tau_{\tilde r+s\ell} \om),
$$

Next we apply Lemma \ref{lem:STO-subadd} to the set on the right,% with the starting time given above, 
and get
\begin{equation*}
\begin{aligned}
\cR_m(Y)(\tau_{s(m+\ell)} \om) \subset &\ \cR_{sm-\tilde r}(Y, sm - \tilde r)(\tau_{\tilde r+s\ell}\om) + \cR_{m}(Y, sm - \tilde r)(\tau_{s(m+\ell)}\om) + \tilde{Y}\\
\subset &\ \cR_{sm-\tilde r}(Y)(\tau_{\tilde r+s\ell}\om) + \cR_{m}(Y)(\tau_{s(m+\ell)}\om) + \tilde{Y},
\end{aligned}
\end{equation*}
where the second line follows from Lemma \ref{lem:monot}.

These relations, together with the fact that $\cR_{sm-\tilde r}(Y) \subset \overline{B}_{[\frac{\ep m}{2\beta}]\beta} + Y$, imply that
\begin{equation*}
\rho \left(\frac{\co \cR_m(Y)(\tau_{s(m+\ell)} \om)}{m}, \frac{\co \cR_m(Y)(\tau_{\tilde r}\circ \tau_{s\ell} \om)}{m}\right) \le \frac{1}{m}\left(\frac{\ep m}{2} + 2\sqrt{n}\right) \le \frac{3\ep}{4}.
\end{equation*}

Combining this with the previous estimate, we showed that \eqref{lim:STO-coR_trans} holds for the fixed $s$ and any $\om \in \Om_s$. This verifies the eligibility of $\Om_s$ and the proof of the lemma is complete.
\end{proof}

It follows from Theorem~\ref{thm:XR} that the average of the convex hull of $\cR_m(Y)(\om)$ converges to $D \in \co \srC$, which is an upper bound of $\lim_{m \to \infty} \cR_m(Y)(\om)/m$ should the latter exist. To show that they are equal, it remains to prove that
$$ \lim_{m\to \infty} \sup_{x \in D} d(x, \cR_m(Y)(\om)/m) =0,$$ %converges to zero. 
and, hence, in view of the the compactness of $D$, it suffices to show  that
$$\lim_{m \to \infty} d(x,\cR_m(Y)(\om)/m) = 0 \  \text{ for any fixed $x \in D$}.$$

This last limit  is the key difficulty in the whole proof. 
As a first step we use the convexity of $D$ and some basic convex analysis to prove the convergence result for any $y \in \cE(D)$, the set of extreme points of $D$. 

We recall that  $e \in D$ is an extreme point of a compact and convex set $D \subset \R^n$ if, whenever  %is a point $e \in D$, with the property that 
$e = \lambda x + (1-\lambda)y$ with  $x,y \in D$ and $\lambda \in [0,1]$, then either  $x = e$ or $y=e$.  Moreover  $p \in D$ is exposed, if there exists a linear functional $f: \R^d \to \R$ such that $f(p) > f(p')$ for all $p' \in D \setminus \{p\}$. %Straszewicz's theorem (see \cite[Theorem 18.6]{Rockafellar}) says that every extreme point is the limit of some sequence of exposed point. Minkowski-Carath\'eodory's thoerem (see \cite[Theorem 8.11]{Simon}) says that any $x \in D$ can be written as a convex combination of $n+1$ extreme points.

\begin{lem}\label{lem:STO-ExtrR} Assume {\upshape(A)} and  let $D$ and $\widetilde\Omega$  be as in Theorem~\ref{thm:XR} and  Lemma~\ref{lem:STO-coR} respectively. For each extreme point $y$ of $D$, $\om \in \widetilde\Omega$ and $s \in \N$, %we have
\begin{equation}
 \quad \lim_{m\to \infty} d\left(y, \; \frac{\cR_m(Y)(\tau_{s(m+\ell)}\om)}{m}\right) = 0. \label{lim:STO-ExtrR}\\
\end{equation}
\end{lem}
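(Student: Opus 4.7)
The plan is to combine the convex-hull convergence already provided by Lemma~\ref{lem:STO-coR} with Carathéodory's theorem and the defining property of extreme points. Write $A_m := \cR_m(Y)(\tau_{s(m+\ell)}\om)/m$; by Lemma~\ref{lem:STO-coR}, $\co A_m \to D$ in $(\srC,\rho)$, and since $y \in D$ we may pick $y_m \in \co A_m$ with $y_m \to y$. The goal is to show that $y_m$ itself, which \emph{a priori} is only a convex combination of points of $A_m$, actually forces at least one of those points to approach $y$.

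To carry this out, I would first apply Carathéodory's theorem in $\R^n$ to write, for each $m$,
\[
y_m = \sum_{i=0}^{n} \lam_i^m z_i^m, \qquad z_i^m \in A_m, \quad \lam_i^m \ge 0, \quad \sum_{i=0}^{n} \lam_i^m = 1.
\]
By Lemma~\ref{lem:Rab}, $A_m \subset \ol{B}_\beta + Y/m$, so the $z_i^m$'s lie in a fixed bounded set. Passing to a subsequence (not relabeled) we obtain $z_i^m \to z_i \in \R^n$ and $\lam_i^m \to \lam_i \in [0,1]$ with $\sum_i \lam_i = 1$, and in the limit $y = \sum_{i=0}^n \lam_i z_i$.

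The second step is to observe that each limit $z_i$ lies in $D$: indeed, $z_i^m \in A_m \subset \co A_m$, and $\co A_m \to D$ in Hausdorff distance implies $\sup_{a \in \co A_m} d(a,D) \to 0$, so $d(z_i, D) = 0$ and $z_i \in D$ by closedness of $D$. Now the extremality of $y$ enters: if $y$ is written as a convex combination $\sum_i \lam_i z_i$ of finitely many points of $D$, then $z_i = y$ for every index $i$ with $\lam_i > 0$ (a simple induction on the number of nontrivial weights from the two-point definition). Since $\sum_i \lam_i = 1$, at least one such index $i_\star$ exists, yielding $z_{i_\star}^m \to y$ with $z_{i_\star}^m \in A_m$, and hence $d(y, A_m) \le |y - z_{i_\star}^m| \to 0$ along the subsequence.

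Finally, to upgrade this to convergence along the full sequence $m \to \infty$, I would argue by contradiction: if \eqref{lim:STO-ExtrR} failed, there would be some $\delta > 0$ and a subsequence on which $d(y, A_m) \ge \delta$; applying the extraction procedure above to this subsequence produces a further subsequence with $d(y, A_m) \to 0$, a contradiction. I do not anticipate a genuine obstacle in this argument — the content is really just Carathéodory plus the geometric definition of an extreme point — but the one place where care is needed is in making sure the compactness/extraction step is applied inside a subsequence chosen to violate the conclusion, so that the contradiction is valid.
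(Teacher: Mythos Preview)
Your argument is correct. It is, however, genuinely different from the paper's proof.

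The paper first reduces to the case where $y$ is an \emph{exposed} point of $D$, invoking Straszewicz's theorem (exposed points are dense in the extreme points). For an exposed $y$ there is a linear functional $f$ with $f(y) > f(x)$ for all $x \in D\setminus\{y\}$; the paper then picks $x_m$ maximizing $f$ on $\co A_m$, chosen in $\mathcal{E}(\co A_m) \subset A_m$. Hausdorff convergence $\co A_m \to D$ forces $f(x_m) \to f(y)$, and since $y$ is the unique maximizer of $f$ on $D$, every cluster point of $x_m$ equals $y$, so $x_m \to y$ with $x_m \in A_m$.

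Your route bypasses Straszewicz entirely: Carath\'eodory writes an approximant $y_m \in \co A_m$ as a convex combination of $n{+}1$ points of $A_m$, a compactness extraction passes to limit weights and points in $D$, and the extremality of $y$ forces any limit point with positive weight to equal $y$. This is more elementary in its ingredients, at the cost of an explicit subsequence--contradiction wrap-up; the paper's version, once the exposing functional is in hand, gives full-sequence convergence without that extra step and ties more directly to the geometric fact $\mathcal{E}(\co A)\subset A$. Both arguments rest on the same input, namely the convergence $\co A_m \to D$ from Lemma~\ref{lem:STO-coR}.
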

\begin{proof} Since, in view of the Straszewicz's theorem \cite[Theorem 18.6]{Rockafellar}, every extreme point of $D$ is the limit of some sequence of exposed points of $D$, without loss of generality, we assume that $y$ is an exposed point of $D$ and choose a linear function $f: \R^n \to \R$ such that $f(y) > f(x)$ for any $x \in D\setminus\{y\}$. 

For each $m$, assume that $f$ achieve its maximum in $\co \cR_m(Y)(\tau_{s(m+\ell)}\om)$ at $x_m$.
Without loss of generality, we may assume that $x_m \in \mathcal{E}(\co \cR_m(Y)(\tau_{s(m+\ell)}\om))$. 
%the set of extreme points of $\co \cR_m(Y)(\tau_{s(m+\ell)}\om)$. 
Then in view of \eqref{lim:STO-coR_trans},
$$
\lim_{m \to \infty} f(x_m/m) = f(y).
$$
It follows that every cluster point of $x_m/m$, which is in $D$, coincides with $y$. This shows that
\begin{equation*}
\lim_{m \to \infty} d\left(y, \; \mathcal{E}\left(\frac{\co \cR_m(Y)(\tau_{s(m+\ell)}\om)}{m}\right)\right) = 0.
\end{equation*}
The desired  limit follows from the fact that $\mathcal{E}(\co A) \subset A$ for any compact set $A \subset \R^n$.
\end{proof}

With all the previous facts at hand, we may now proceed to the proof of Theorem \ref{thm:STO-R}. As mentioned earlier, the claim  will follow if we show $\lim_{k \to \infty} d(x,\cR_k(Y)(\om)/k) = 0$ for an arbitrary $x \in D$. That is, for any $\ep$ neighborhood $V$ of $x$ and $k$ sufficiently large, we need to find $\gam \in \cA_{0,k}(\om)$ such that $\gam(k)/k \in V$.   Next we explain briefly the idea of how to construct $\gam$.

We use the Minkowski-Carath\'eodory theorem \cite[Theorem 8.11]{Simon}  to express $x$ as a convex combination  of $n+1$ extreme points $(y_i \in \cE(D))_{i=1,\ldots,n+1}$. The reason for this is that, for each extreme point $y_i$, it is possible to  find sub-paths $\gam_{ij} \in \cA_{0,m}(\tau_{s_j(m+\ell)}\om)$, for  some  sequence $(s_j)_j$ and and appropriately chosen  large  $m\in \N$, such that $\gam_{ij}(m)/m$ lies in a small neighborhood of $y_i$. Then we  construct the desired $\gam \in \cA_{0,k}(\om)$ by translating and connecting those sub-paths $(\gam_{ij})_{ij}$. In the proof that follows, we carefully carry out these arguments. It turns out that, the integer $m$ and the sequence $(s_j)_j$ can be chosen according to some rational approximation of the coefficients in the convex combination of $x$. The periodicity in space and the stationarity in time are crucial for this construction to work.

\begin{proof}[Proof of Theorem \ref{thm:STO-R}]  Since the argument is long, we divide the proof in three steps.

{\itshape Step 1: Pointwise convergence in Euclidean distance.} Let $\widetilde\Om$ be as in Lemma~\ref{lem:STO-coR}. We show that,  for each fixed $x\in D$ and $\om \in \widetilde \Om$,
\begin{equation}\label{STO-s1}
\lim_{k\to \infty} d\left(x, \frac{\cR_k(Y)(\om)}{k}\right)=0.
\end{equation}

As explained in the discussion prior to the proof, % Now that $x$ is fixed, in view of the Minkowski-Carath\'eodory theorem \cite[Theorem 8.11]{Simon}, 
there exist $n+1$ extreme points $y_1, \ldots, y_{n+1}$ of $D$ and $n+1$ numbers $\lambda_1, \ldots, \lambda_{n+1}$ in $[0,1]$ with $\sum_{i=1}^{n+1} \lambda_i = 1$ such that
\begin{equation*}
x  = \lambda_1 y_1 + \lambda_2 y_2 + \cdots + \lambda_{n+1} y_{n+1}.
\end{equation*}

Fix $\ep > 0$ and choose $q \in \N$ sufficiently large and $r_1,r_2,\ldots,r_{n+1} \in \N$ such that  $q=\sum_{i=0}^{n+1} r_i$ and, moreover, for any $i = 1, \ldots, n+1$,
\begin{equation}
\left| \lambda_i  - \frac{r_i}{q}\right| \;\le\; \frac{\ep}{4(n+1)\beta}.
\end{equation}

It follows from  \eqref{lim:STO-ExtrR} that  there exists  $M_\ep \in \N$ such that, if $m \ge M_\ep$, then
\begin{equation}
\max_{1 \le i \le n+1}\; \max_{0 \le s \le q} \; d\left(y_i, \frac{\cR_m(Y)(\tau_{s(m+\ell)} \om)}{m} \right) \;\le\; \frac{\ep}{4}.
\label{lim:D_unif}
\end{equation}

For any $k \ge q(M_\ep + \ell)$, let $m \ge M_\ep$ be the unique integer such that $q(m+\ell) \le k < q(m+1+\ell)$ and,  for each $i=1,2,\ldots,n+1,$ and $j = 0,\ldots,r_i-1$, set $s_{ij} := r_{i-1} + j$. Let $r_0 := 0$ so that $s_{1j} = j$ for $j = 0, \ldots, r_1 - 1$. Then $d(y_i, m^{-1}\cR_m(Y,\tau_{s_{ij}(m+\ell)}\om))$ is controlled by \eqref{lim:D_unif}, which yields the existence of
%\begin{equation*}
$\gam_{ij} \in \cA_{0,m}(\tau_{s_{ij}(m+\ell)}\om)$
%\gamma_{i,j} \in \cA_{0,m}(\om),\quad \gamma_{1,1} \in \cA_{0,m}(\tau_{(m+\ell)} \om), \ldots, \quad \gamma_{1,r_1 -1} \in \cA_{0,m} (\tau_{(r_1-1)(m+\ell)} \om)
%\end{equation*}
such that the end point $y_{ij} := \gamma_{ij}(m)$ satisfies
$$
\left|y_i - \frac{y_{ij}}{m}\right| \le \frac{\ep}{4}.
$$

We denote the starting point $\gam_{ij}(0)$  by $y^0_{ij}$. and next we construct an admissible path $\gamma \in \cA_{0,k}(\om)$ using the sub-paths $\gam_{ij}$, $i=1,2,\ldots,n+1$, $j = 0,1,\ldots,r_i-1$, as follows. 

Set $i = 1$. We connect the sub-paths $\gam_{1j}$, with  $j = 0,1,\ldots,r_1-1,$ to construct $\gam \in \cA_{0,r_1(m+\ell)}(\om)$. For $0\le t < m$, we set $\gamma(t) := \gamma_{10}(t)$, and for $m \le t < m+\ell$, we define $\gamma$ to be a bridge connecting $y_{10}$ to $[y_{10}] + y^0_{11}$ constructed as in Lemma \ref{lem:ODE}. Note that at $t = m + \ell$, the path is ready to be connected with $\gamma_{11}$. Hence, for $m+\ell \le t < 2m + \ell$, we define $\gamma(t) = [y_{10}] + \gamma_{11}(t - (m+\ell))$, and then for $2m+\ell \le t < 2(m+\ell)$ we build a bridge to $[y_{10}] + [y_{11}] + y^0_{12}$. It follows from the periodicity in space and the stationarity in time that $\gamma \in \cA_{0,2(m+\ell)}(\om)$. 

We repeat this procedure for a total of $r_1$ times as follows. Suppose $\gam$ is constructed on $[0,j(m+\ell)]$. Then, for $t\in [j(m+\ell),j(m+\ell)+m]$, set $\gam(t) := \sum_{l=0}^{j-1} [y_{1l}] + \gam_{1j}(t-j(m+\ell))$. Next, for $t\in [j(m+\ell)+m,(j+1)(m+\ell)]$, set $\gam$ to be the bridge that connects $\gam(j(m+\ell)+m)$ to $\sum_{l=0}^{j} [y_{1l}] + y^0_{1(j+1)}$.  In the $r_1$-th step and for $r_1(m+\ell) - \ell \le t \le r_1(m+\ell)$, $\gamma(t)$ is chosen as a bridge which connects $\gam(r_1(m+\ell)-\ell)$ and $\sum_{j=0}^{r_1-1} [y_{1j}] + y^0_{20}$. By construction, $\gamma \in \cA_{0,r_1(m+\ell)}(\om)$ and, in particular,
$
\gamma(r_1(m+\ell)) = \sum_{j=0}^{r_1-1}[y_{1j}] + y^0_{20}.
$

Now suppose that, for some $1\le i \le n$, we have constructed $\gam \in \cA_{0,S_i(m+\ell)}(\om)$ where $S_i := \sum_{p=1}^i r_p$ and, in particular, $\gam(S_i(m+\ell)) = \sum_{p=1}^i \sum_{j=0}^{r_p-1} [y_{pj}] + y^0_{(i+1)0}$. We continue the construction so that $\gam \in \cA_{0,S_{i+1}(m+\ell)}(\om)$. Since $\gam(S_i(m+\ell)) = y^0_{(i+1)0}$ modulo an element in $\Z^n$, we can connect the sub-path $\gam_{(i+1) 0}$ to $\gam$. Then, following the procedure as in the case of $i=1$, we translate and connect the sub-paths $\gam_{(i+1)j}$, $j = 0,1,\ldots, r_{i+1}-1$, to $\gam$ and obtain $\gamma \in \cA_{0,S_{i+1}(m+\ell)} (\om)$. We have, in particular,
$
\gamma(S_{i+1}(m+\ell)) = \sum_{p=1}^{i+1}\sum_{j=0}^{r_p -1} [y_{pj}] + y^0_{(i+2)0},
$
which is ready for the next step in the induction.

After $n+1$ steps, we obtain $\gamma \in \cA_{0,q(m+\ell)}$. %, which is essentially resulted from translating and connecting the sub-paths $\gam_{i,j}$, for $i = 1, \ldots, n+1$ and $j = 0,1,\ldots, r_i-1$.
We can set $y^0_{(n+2)0} := 0$ in the $(n+1)$-th step, so that $\gamma(q(m+\ell)) = \sum_{i=1}^{n+1}\sum_{j=0}^{r_i-1} [y_{ij}]$.

Finally, for $q(m+\ell) \le t < k$, we let $\gamma(t) = \gamma(q(m+\ell))$. Then, $\gamma \in \cA_{0,k}(\om)$, and 
\begin{equation*}
{x_k} := \frac{\gamma(k)}{k} = \frac{1}{k} \sum_{i=1}^{n+1} \sum_{j=0}^{r_i -1} [y_{ij}] \in \frac{\cR_k(Y)(\om)}{k}.
\end{equation*}

Let $K_\ep := \max\{q(M_\ep + \ell), [8q(\ell+1)\beta/\ep] + 1\}$ and observe that,  if $k \ge K_\ep$, then $|x_k - x| \le \ep$. Indeed, from the construction above, we have
\begin{equation*}
|x - x_k| \le \left|\sum_{i=1}^{n+1} \left(\lambda_i - \frac{r_i}{q}\right) y_i \right| + \left|\sum_{i=1}^{n+1} \frac{r_i}{q} y_i - \sum_{i=1}^{n+1}\sum_{j=0}^{r_i-1} \frac{[y_{ij}]}{k}\right|.
\end{equation*}

For the first term on the right hand side above, in view of \eqref{Dab} and $y_i \in D \subset \overline{B}_\beta(0)$, we have
\begin{equation*}
\left|\sum_{i=1}^{n+1} \left(\lambda_i - \frac{r_i}{q}\right) y_i \right|  \le \beta \sum_{i=1}^{n+1} \frac{\ep}{4(n+1)\beta} = \frac{\ep}{4}.
\end{equation*}

For the second term, we rewrite the sum as
\begin{equation*}
\sum_{i=1}^{n+1} \sum_{j=0}^{r_i-1} \left(\frac{y_i}{q} - \frac{y_{ij}}{k} + \frac{\hat{y}_{ij}}{k}\right) = \sum_{i=1}^{n+1} \sum_{j=0}^{r_i-1} \left(\frac{m}{k} \left(y_i - \frac{y_{ij}}{m}\right) + \left(\frac{1}{q} - \frac{m}{k}\right) y_i + \frac{\hat{y}_{ij}}{k}\right).
\end{equation*}
and estimate each of the three terms in the sum below.

Using that $qm < k$, we find
\begin{equation*}
\sum_{i=1}^{n+1} \sum_{j=0}^{r_i-1} \frac{m}{k} \left|y_i - \frac{y_{ij}}{m}\right| \le  \sum_{i=1}^{n+1} \sum_{j=0}^{r_i-1} \frac{m}{k} \frac{\ep}{4} = \frac{\ep}{4} \frac{qm}{k} < \frac{\ep}{4},
\end{equation*}
the fact that $y_i \in D \subset \ol{B}_\beta$ yields 
\begin{equation*}
\sum_{i=1}^{n+1} \sum_{j=1}^{r_i-1} \left|\left(\frac{1}{q} - \frac{m}{k}\right) y_i\right| \le \beta \sum_{i=1}^{n+1} \sum_{j=1}^{r_i-1} \left(\frac{1}{q} - \frac{m}{k}\right) = \beta\left(1-\frac{qm}{k}\right)
\le \frac{\beta q(\ell+1)}{k} \le \frac{\ep}{8},
\end{equation*}
and, finally and for the third term, since $\hat{y}_{ij} \in Y \subset \overline{B}_{\sqrt{n}}$ we have
\begin{equation*}
\sum_{i=1}^{n+1} \sum_{j=1}^{r_i-1} \left|\frac{\hat{y}_{ij}}{k}\right| \le  \frac{q\sqrt{n}}{k} \le \frac{\beta q(\ell+1)}{k} \le \frac{\ep}{8}.
\end{equation*}

Combining these estimates above, we establish \eqref{STO-s1}.

%%%%%%%%%%%%%%
{\it Step 2: Convergence in Hausdorff metric.} Since we work with an fixed $\om \in \widetilde\Om$, for simplicity, we omit its dependence. 

We prove that $D$ is the long time average of $\cR_m(Y)/m$, $m \in \N$, that is
\begin{equation}\label{lim:Rm}
\lim_{m \to \infty} \frac{\cR_m(Y)}{m} = D \quad \text{in} \quad (\srC, \rho).
\end{equation}

Let $f_m : D \to \R$ be defined by
\begin{equation*}
f_m (x) := d\left(x, \frac{\cR_m(Y)}{m}\right) = \inf_{y \in \cR_m(Y)/m} d(x,y).
\end{equation*}

In view of \eqref{STO-s1}, $f_m(x)\to 0$ for each $x\in D$. Since the sequence $(f_m)_{m\in \N}$ is uniformly bounded and equicontinuous in $D$, %by the Arzel\`a-Ascoli theorem, 
it follows that actually $f_m \to 0$ uniformly in $D$, that is
\begin{equation}\label{s2-c1}
\lim_{m \to \infty}\quad \sup_{x \in D} \ \inf_{y \in \cR_m(Y)/m} d(x,y) = 0.
\end{equation}
 
Next we use \eqref{lim:coR} and the fact that $\cR_m(Y) \subset \co \cR_m(Y)$ for all $m \in \N$ to get
\begin{equation}\label{s2-c2}
0 \,\le\, \limsup_{m \to \infty} \quad \sup_{x \in \cR_m(Y)/m}\  \inf_{y \in D} d(x,y) \,\le\, \limsup_{m \to \infty} \quad \sup_{x \in \co \cR_m(Y)/m}\  \inf_{y \in D} d(x,y) \,=\, 0,
\end{equation}

Finally, \eqref{s2-c1} and \eqref{s2-c2} together confirm the claim \eqref{lim:Rm}.

%%%%%%%%%%%%%
{\it Step 3: From discrete to continuous time.} We prove that $D$ is the long time limit of $\cR_t(Y)/t$ for $t \in \R$. 

Recall that,  for any $A \in \srC$ and $c \in \R$,
\begin{equation}\label{cA}
\rho(cA, A) \le |c-1| \|A\|.
\end{equation}

This follows from the observation that, for any $x \in cA$, there exists $x' \in A$ such that $x = cx'$ and, since $|x-x'| = |cx'-x'| \le |c-1|\|A\|$,
\begin{equation*}
\sup_{x \in cA} \inf_{y \in A} |x-y|\le |c-1|\cdot \|A\|,
\end{equation*}
while, for any $x \in A$, there exists $x' \in cA$ such that $x' = cx$ and, since $|x-x'| = |cx-x| \le |c-1| \|A\|$,
\begin{equation*}
\sup_{x \in A} \inf_{y \in cA} |x-y| \le |c-1|\|A\|.
\end{equation*}
%The claim \eqref{cA} follows.

Also the monotonicity of $\cR_t(Y)$ with $t>0$  yields 
\begin{equation*}
\cR_{[t]}(Y) \subset \cR_t(Y) \subset \cR_{[t]+1}(Y).
\end{equation*}

It follows that
\begin{equation*}
\sup_{x \in \cR_t(Y)} \inf_{y \in tD} |x-y| \le \sup_{x \in \cR_{[t]+1}(Y)} \inf_{y \in tD} |x-y| \le \quad \rho(\cR_{[t]+1}(Y), tD),
\end{equation*}
and
\begin{equation*}
\sup_{x \in tD} \inf_{y \in \cR_t(Y)} |x-y| \le \quad \sup_{x \in tD} \inf_{y \in \cR_{[t]}(Y)} |x-y| \le \quad \rho(\cR_{[t]}(Y), tD)
\end{equation*}
and, thus,
\begin{equation}\label{s3-c1}
\rho(\cR_t(Y),tD) \le \max \left\{\rho(\cR_{[t]}(Y), tD), \rho(\cR_{[t]+1}(Y), tD)\right\}.
\end{equation}

Using the triangle inequality and \eqref{cA}, we also get
\begin{equation}\label{s3-c2}
\rho(\cR_{[t]}(Y),tD) \le \rho(\cR_{[t]}(Y),[t]D) + \rho([t]D,tD) \le \rho(\cR_{[t]}(Y),[t]D) + \|D\|,
\end{equation}
and, as above,
\begin{equation}\label{s3-c3}
\rho(\cR_{[t]+1}(Y),tD) \le \rho(\cR_{[t]+1}(Y),([t]+1)D) + \|D\|.
\end{equation}

Finally, in view of  the positive homogeneity of $\rho$ and \eqref{s3-c1}, \eqref{s3-c2}, and \eqref{s3-c3}, we have 
\begin{equation*}
\rho\left(\frac{\cR_t(Y)}{t},D\right) \le \max \left\{ \frac{[t]}{t} \rho\left(\frac{\cR_{[t]}(Y)}{[t]}, D\right), \frac{[t]+1}{t} \rho\left(\frac{\cR_{[t]+1}(Y)}{[t]+1}, D\right) \right\} + \frac{\|D\|}{t} \xrightarrow{t \to \infty} 0.
\end{equation*}
\end{proof}

%%%%%%%%%%%%%%%%
% Remark on the periodic setting

%\begin{rem}[Comment on the space-time periodic setting] 

In this somewhat more standard setting, $a(x,t)$ is $\Z^{n+1}$-periodic, that is, for $(x,t) \in \R^n \times \R$, and $(k,l)\in \Z^n\times \Z$,
\begin{equation*}
a(x+k,t+l)=a(x,t),
\end{equation*}
the proof simplifies considerably. Indeed this is a special case of the general setting of Section \ref{sec:prelim} and corresponds to $\Om$ having one single element. It is then easy to see that, for any $x \in \R^n$, $k \in \N$ and $t \ge 0$,  $\cR_t(x) = [x] + \cR_t(\hat{x})$ and $\cR_{t+k}(x,k) = \cR_t(x)$. %for any $x \in \R^n$, $k \in \N$ and $t \ge 0$. 
Moreover, as in Lemma \ref{lem:STO-subadd}, the family $(\cR_m(Y))_{m \in \N}$ is almost subadditive, in the sense that, for all $m, k \in \N$ with $k < m$,
\begin{equation}
\cR_m(Y) \subset \cR_k(Y) + \cR_{m-k}(Y)+\tilde Y.
\end{equation}

It follows that $\lim_{m \to \infty} \co \cR_m(Y)/m = D$ in $(\srC, \rho)$. As before, we can prove that $D$ is also the limit of $\cR_m(Y)/m$, once we establish \eqref{STO-s1}. The proof of the latter is much simpler in the space-time periodic setting. We do not need the technical Lemma \ref{lem:STO-coR} which, in fact, is no different than Theorem \ref{thm:XR}. Indeed, for any $x \in D$ which is a convex combination of $(y_i \in \cE(D))_{i=1,\ldots,n+1}$, to construct $\gam \in \cA_{0,k}$ as in the proof of Theorem \ref{thm:STO-R} above, we only use one sub-path $\gam_i  \in \cA_{0,m}$ for each $y_i$, and then copy, translate and connect them. In the temporal random setting, the environment does not simply repeat itself and, for each $y_i$, we had to find a sequence of sub-paths $(\gam_{ij} \in \cA_{0,m}(\tau_{s_{ij}(m+\ell)}\om))_{0\le j \le r_i-1}$, where $\tau_{s_{ij}(m+\ell)}\om$ takes care of the change of the environment.
%\end{rem}

%%%%%%%%%%%%%%%%%%%%%%%%%%%%%%%%%%%%%%%%%%%%%%%%%%%%%%%%%%%%%%%%%%%%%%%%%%%%
\section{The proofs of the main results} \label{sec:hom}

%%%%%%%%%%%%%
% Theorem: Convergence of \cR_t(x)(\om)

We prove Theorem \ref{thm:STO-R0} first and demonstrate how the large time average of the enlarged reachable set $\cR_t(Y)(\om)$ controls that of reachable set $\cR_t(x)(\om)$ from any point. Then  we apply Theorem \ref{thm:STO-R0} to prove the homogenization theory for the level-set equation \eqref{PDE1}.

\subsection*{Large time average of reachable set from a point} 

Theorem \ref{thm:STO-R0} says, essentially, that the large time average of the reachable set $\cR_t(x)(\om)$ converges in $(\srC, \rho)$ uniformly in $Y$. In view of \eqref{Rx_rand}, this convergence is in fact local uniform in  $\R^n$.

\begin{proof}[Proof of Theorem \ref{thm:STO-R0}]
Set $\widetilde\Om$ be as in Lemma \ref{lem:STO-coR} and $\ell=[\sqrt{n}/\alpha]+1$ as in Lemma \ref{lem:ODE}. For any $\om \in \widetilde\Omega$ and any $x \in Y$, in view of \eqref{Rt-base}, we have  $Y \subset \overline{B}_{\sqrt{n}}(x) \subset \cR_{\ell}(x)(\om)$. Moreover, the definition of $\cR_t$ and \eqref{Rt_rand}, for $t > \ell$, yield%, for $t > \ell$,
\begin{equation*}
\cR_t(x)(\om) = \bigcup_{y \in \cR_{\ell}(x)(\om)} \cR_t(y,\ell)(\om) = \bigcup_{y \in \cR_{\ell}(x)(\om)} \cR_{t-\ell}(y)(\tau_\ell \om) \supset \bigcup_{y \in Y} \cR_{t-\ell}(y)(\tau_\ell \om) = \cR_{t-\ell}(Y)(\tau_\ell \om),
\end{equation*}
and, thus,
\begin{equation}\label{Rset-incl}
\cR_{t-\ell}(Y)(\tau_\ell \om) \subset \cR_t(x)(\om) \subset \cR_t(Y)(\om).
\end{equation}

Using an argument similar to the one that leads to \eqref{s3-c1}, we obtain
\begin{equation}\label{Rtx-c1}
\rho\left(\cR_t(x)(\om), tD\right) \le \max\left\{ \rho(\cR_{t-\ell}(Y)(\tau_\ell \om), tD), \rho(\cR_{t}(Y)(\om), tD) \right\}.
\end{equation}

Note that estimate above holds for all $x \in Y$, while the right hand side is independent of $x$. As a result, \eqref{Rtx-c1} can be improved to
\begin{equation}\label{Rtx-c2}
\sup_{x \in Y} \rho\left(\frac{\cR_t(x)(\om)}{t}, D\right) \le \max\left\{ \rho\left(\frac{\cR_{t-\ell}(Y)(\tau_\ell \om)}{t}, D\right), \rho\left(\frac{\cR_{t}(Y)(\om)}{t}, D\right) \right\},
\end{equation}
and to conclude we only need to control each of the two terms inside the max on the right hand side.

Theorem \ref{thm:STO-R} yields that the second term converges to zero as $t \to \infty$, while for the 
the first term, using \eqref{cA} and the positive homogeneity of $\rho$, we get 
\begin{equation}
\begin{aligned}
\rho\left(\frac{\cR_{t-\ell}(Y)(\tau_\ell \om)}{t}, D\right) &\le \rho\left(\frac{\cR_{t-\ell}(Y)(\tau_\ell \om)}{t}, \frac{t-\ell}{t}D\right) + \rho\left(\frac{t-\ell}{t}D, D\right)\\
&\le \frac{t-\ell}{t} \rho\left(\frac{\cR_{t-\ell}(Y)(\tau_\ell \om)}{t-\ell}, D\right) + \frac{\ell \|D\|}{t}.
\end{aligned}
\end{equation}

As $t \to \infty$, the last term above vanishes. % Theorem \ref{thm:STO-R}.
The proof of \eqref{lim:Rt-uni} is now complete, and since \eqref{lim:R0t} is a weaker statement, it follows immediately.
\end{proof}

%%%%%%%%%%%%%%%%%%%%%%%%%%%%%%%%%%%%%%%%%%%%%%%%%%%%%%%%%%%%%%%%%%%%%%%%%%%%
\subsection*{The homogenization of the level-set pde}

%We prove Theorem \ref{thm:stoch}, which says that, almost surely in $\Om$, the solution $u^\ep$ of \eqref{PDE1} converges locally uniformly to the solution $u$ of \eqref{PDE-hom}. 
Recall that the  Lax-Oleinik formula for the solution of \eqref{PDE-hom} yields that
\begin{equation*}
\overline u(x,t) = \inf \{u_0(y)\,:\,\frac{x-y}{t} \in D \}=\inf\{u_0(y)\,:\,x\in y+tD\}.
\end{equation*}

\begin{proof}[Proof of Theorem \ref{thm:stoch}]
Fix $\om \in \widetilde\Omega$, $T>0$ and $R>0$.
The representation formula of $u^\ep$ gives, for any $(x,t)\in B_R \times [0,T]$,
\begin{equation}\label{uep_rep}
\begin{aligned}
u^\ep(x,t,\om)&=\inf \left\{u_0(y)\,:\,\frac{x}{\ep} \in \cR_{\frac{t}{\ep}}\left(\frac{y}{\ep}\right)(\om)\right\}\\
&=\inf \left\{u_0(y)\,:\, x \in \ep \left( \left[\frac{y}{\ep}\right]+\cR_{\frac{t}{\ep}}\left(\frac{y}{\ep}-\left[\frac{y}{\ep}\right]\right)(\om)\right)\right\}.
\end{aligned}
\end{equation}

In light of \eqref{Rt-base}, for $\ep \in (0,1)$ and $t\in [0,T]$,
\begin{equation}\label{set-control}
\begin{aligned}
D_\ep(y,t,\om) &:= \ep \left( \left[\frac{y}{\ep}\right]+\cR_{\frac{t}{\ep}}\left(\frac{y}{\ep}-\left[\frac{y}{\ep}\right]\right)(\om)\right) = y + \ep \left[ \cR_{\frac{t}{\ep}} \left(\frac{y}{\ep}-\left[\frac{y}{\ep}\right]\right)(\om) -  \left(\frac{y}{\ep}-\left[\frac{y}{\ep}\right]\right) \right]\\
& \subset y+\ep \ol{B}_{\frac{t\beta}{\ep}} = y + \ol{B}_{t\beta} \subset y + \ol{B}_{T\beta},
\end{aligned}
\end{equation}
a fact  yielding  that $x\in D_\ep(y,t,\om)$ only if $|y| \leq T\beta+R$.

Next let  $K_\ep(x,t,\om) := \{y :\, x \in D_\ep(y,t,\om) \}$ and $K(x,t) := \{y \,:\, x \in y + tD\}=x-tD$. We show that
\begin{equation}\label{lim:K}
\lim_{\ep \to 0}\, \sup_{(x,t) \in B_R\times [0,T]} \, \rho \left(K_\ep(x,t,\om), K(x,t) \right) = 0.
\end{equation}

Once this limit is established, it follows that
\begin{align*}
u^\ep(x,t,\om)
=&\inf \left\{u_0(y)\,:\,   x \in D_\ep(y,t,\om)\right\}=
\inf \left\{u_0(y)\,:\, y\in \ol{B}_{T\beta+R} \quad \text{and} \quad y \in K_\ep(x,t,\om)\right\}\\
\xrightarrow{\ep \to 0}\ &\inf \left\{u_0(y)\,:\,  y\in \ol{B}_{T\beta+R} \quad \text{and} \quad y \in K(x,t)\right\}
=\inf \left\{u_0(y)\,:\,  x \in y+tD\right\}=\overline u(x,t),
\end{align*}
and, moreover, the convergence is uniform on $B_R \times [0,T]$. %This proves Theorem \ref{thm:stoch}.

It remains to prove \eqref{lim:K}. Fix  $\delta > 0$ and consider first the case $0
\le t \le \delta$. In view of \eqref{set-control}, for all $x \in B_R$,
\begin{equation*}
\{x\} \subset K_\ep(x,t,\om) \subset \{y \,:\, x \in y+ \ol{B}_{t\beta}\} = x - \ol{B}_{t\beta}, 
\end{equation*}
and it follows that
\begin{equation*}
\sup_{y_1 \in K_\ep(x,t,\om)} \, \inf_{y_2 \in K(x,t)} |y_1-y_2| \le \rho\left( x - \ol{B}_{t\beta}, x - tD\right) = t\rho(\ol{B}_{\beta}, D) \le \delta \rho(\ol{B}_{\beta},D),
\end{equation*}
and
\begin{equation*}
\sup_{y_1 \in K(x,t)} \, \inf_{y_2 \in K_\ep(x,t,\om)} |y_1-y_2| \le \rho\left(x-tD, \{x\} \right) = t\|D\| \le \delta \|D\|.
\end{equation*}

Since the two estimates above are uniform in $x$ and $t$,  for $C := \max(\rho(\ol{B}_{\beta},D), \|D\|)$, we get
\begin{equation}\label{K-c1}
\sup_{(x,t) \in B_R \times [0,\delta]} \rho(K_\ep(x,t,\om), K(x,t)) \le C \delta.
\end{equation}

Next we consider the case $t > \delta$. Taking $\ep$ small so that $t/\ep > \ell$, we claim that, for all $x \in B_R$,
\begin{equation}\label{K-c2}
x - \ep \cR_{\frac{t}{\ep} - \ell}(Y)(\tau_\ell \om) \subset K_\ep(x,t,\om) \subset x - \ep \cR_{\frac{t}{\ep}}(Y)(\om) + \ep Y.
\end{equation}

The first inclusion follows from the observation that, if $x \in y + \ep \cR_{\frac{t}{\ep} - \ell}(Y)(\tau_\ell \om)$, then
\begin{equation*}
 x \in y + \ep \left[\cR_{\frac{t}{\ep}} \left(\frac{y}{\ep}-\left[\frac{y}{\ep}\right]\right)(\om) -  \left(\frac{y}{\ep}-\left[\frac{y}{\ep}\right]\right) \right] = D_\ep(y,t,\om),
\end{equation*}
which is a consequence of the first inclusion in \eqref{Rset-incl}.

The second inclusion in \eqref{K-c2} is due to the fact that, if $x \in D_\ep(y,t,\om)$, then
\begin{equation*}
x \in y + \ep \cR_{\frac{t}{\ep}}(Y)(\om) + \ep \tilde{Y},
\end{equation*}
which follows from the second inclusion in \eqref{Rset-incl}. Thus,
\begin{equation*}
\sup_{y_1 \in K_\ep(x,t,\om)} \, \inf_{y_2 \in K(x,t)} |y_1-y_2| \le \rho\left( x - \ep\cR_{\frac{t}{\ep}}(Y)(\om) + \ep Y, x - tD\right) = t\rho\left(\frac{\cR_{t/\ep}(Y)(\om)}{t/\ep} + \frac{\ep \tilde{Y}}{t}, D\right),
\end{equation*}
and
\begin{equation*}
\sup_{y_1 \in K(x,t)} \, \inf_{y_2 \in K_\ep(x,t,\om)} |y_1-y_2| \le \rho\left(x-tD, x - \ep\cR_{\frac{t}{\ep}-\ell}(Y)(\tau_\ell \om) \right) = t \rho \left(D, \frac{\cR_{t/\ep - \ell}(Y)(\tau_\ell \om)}{t/\ep}\right).
\end{equation*}

Again the uniformity  in $x$ and $t$ of the last two estimates are uniform in $x$ and $t$ implies 
\begin{equation}\label{K-c3}
\begin{aligned}
\sup_{(x,t) \in B_R \times (\delta,T]} \rho(K_\ep(x,t,\om), K(x,t)) \le &\ T\max\left\{ \rho\left(\frac{\cR_{t/\ep}(Y)(\om)}{t/\ep}, D\right), \rho\left(\frac{\cR_{t/\ep - \ell}(Y)(\tau_\ell \om)}{t/\ep}, D\right) \right\}\\
 & + \ep\|\tilde{Y}\|.
\end{aligned}
\end{equation}

We first let $\ep \to 0$ and apply Theorem \ref{thm:STO-R} in \eqref{K-c3} and then we let $\delta \to 0$ in \eqref{K-c1}. Combining these two estimates, we establish \eqref{lim:K} and complete the proof of the theorem.
\end{proof}

%%%%%%%%%%%%%%%%%%%%%%%%%%%%%%%%%%%%%%%%%%%%%%%%%%%%%%%%%%%%%%%%%%%%%%%%%%%%
\section{Further applications}
\label{sec:applications}

In this section, we present two more  examples of Hamilton-Jacobi equations with Hamiltonians of linear growth. The first concerns front propagation in an environment that is subjected to a background drift, and the second involves a non-coercive Hamiltonian.

%These examples demonstrate that our approach works in general situations where, along with the spatial periodic and temporal stationary ergodic structure of the environment, there is a positive lower bound on the growth rate of reachable sets.

%%%%%%%%%%%%%%%%%%%%%%%%%%%%%%%%%%%%%%%%%%%%%%%%%
\subsection*{Front propagation with ambient drift.}

We study the behavior, as $\ep \to 0$, of the solution $u^\ep = u^\ep(x,t,\om)$ to
\begin{equation}\label{PDE-drift}
\begin{cases}
u^\ep_t+a\left(\frac{x}{\ep},\frac{t}{\ep}, \om\right)|Du^\ep| + b\left(\frac{x}{\ep},\frac{t}{\ep},\om\right) \cdot Du^\ep =0 \quad &\text{in} \ \R^n \times (0,\infty),\\
u^\ep =u_0 \quad &\text{on} \ \R^n \times \{0\},
\end{cases}
\end{equation}
which models front propagation with velocity $V = a\left(\frac{x}{\ep},\frac{t}{\ep},\om\right) \nu + b\left(\frac{x}{\ep},\frac{t}{\ep},\om\right)$, where $\nu$  is the normal vector to the front.% \nu(x,t,\om) = Du^\ep/|Du^\ep|$ is the unit normal vector.

In addition to {\upshape(A)}, we assume that the random process $b(\cdot,\cdot,\om) \in C^{0,1}(\R^{n+1},\R^n)$ satisfies
\begin{enumerate}
 \item[(B1)]  $b=b(x,t,\omega)$ is $\Z^n$-periodic in $x$ and stationary in $t$ with respect to $(\tau_k)_{k\in\Z}$, that is, for every $(x,l)\in \R^n \times \Z^n$, $(t,k)\in \R \times \Z$, and $\omega\in\Omega$, 
\[
b(x+l,t,\tau_k\omega) = b(x,t+k,\omega),
\]
 
\item[(B2)] there exist $\eta>0$ such that for all $(x,t)\in \R^{n+1}$ and $\omega \in \Omega$,
\begin{equation}\label{B2}
\alpha - |b(x,t,\om)| \ge \eta.
\end{equation}
\end{enumerate}

As before, we group the above assumptions as
\begin{enumerate}
\item[(B)] $b = b(x,t,\om)$ satisfies {\upshape(B1)} and {\upshape(B2)}.
\end{enumerate}

In view of (B2), the Hamiltonian $H(x,t,p,\om) = a(x,t,\om)|p| + b(x,t,\om)\cdot p$ is coercive and 
%We calculate and verify that the corresponding Lagrangian $L(x,t,q,\om)$ is given by
%\begin{equation}
%L(x,t,q,\om) = \begin{cases}
%0, \qquad& \text{for } |q-b(x,t,\om)| \le a(x,t,\om),\\
%+\infty, \qquad& \text{otherwise}.
%\end{cases}
%\end{equation}
the representation formula for the solution $u^\ep$ is 
\begin{equation}
u^\ep(x,t,\om) = \inf \left\{ u_0(y) \; : \; \frac{x}{\ep} \in \cR_{\frac{t}{\ep}} \left(\frac{y}{\ep}\right)(\om) \right\},
\end{equation}
for the reachable set $\cR_t(x)(\om)$ with admissible paths associated to the control system
\begin{equation*}
\begin{cases}
\gam'(r) = f(\gam(r),r,\xi(r)) := b(\gam(r),r) + a(\gam(r),r) \xi(r) \quad &\text{for} \ r\in (0,t),\\
|\xi(r)| \leq 1  \quad &\text{a.e.}\   r \in (0,t),
\end{cases}
\end{equation*}
%where the control $\xi \in L^\infty([0,t],\ol{B}(0,1))$. 

The set of admissible paths in the time interval $[s,t]$ is
\begin{equation}\label{def:A-b-Om}
\cA_{s,t}(\omega) := \{\gam:[s,t] \to \R^n \,:\, |\gam'(r)-b(\gam(r),r,\om)| \leq a(\gam(r),r,\omega) \quad \text{for a.e.} \ r\in [s,t]\}.
\end{equation}

Following the arguments developed earlier, we can show that the large time average $\cR_t(x)(\om)/t$, for almost all $\om \in \Om$, converges in $(\srC,\rho)$, to some compact and convex subset $D$ of $\R^n$. Indeed, thanks to (B2), $\ol{B}_{t\eta}(x) \subset \cR_t(x)(\om) \subset \ol{B}_{t(\beta+\alpha-\eta)}$ (this is the analogue of \eqref{Rt-base}) and, furthermore, any two points $y_1, y_2 \in Y$ can be connected by an admissible path within time $\ell' := [\sqrt{n}/\eta]  + 1$ (this is an analogue of Lemma \ref{lem:ODE}). On the other hand, due to the periodicity in space and stationary ergodicity in time of the control system, the translation rules  and the (modified) subadditivity of $\cR_t(x)(\om)$, that is Lemma \ref{lem:R_rand} and Lemma \ref{lem:STO-subadd}, still hold. Hence, we can carry out the whole program of the analysis in this paper aand obtain the following homogenization result.  Its  proof is exactly the same as those of Theorem \ref{thm:STO-R0} and Theorem \ref{thm:stoch} and hence is omitted.

\begin{thm}\label{thm:STO-drift}
Assume {\upshape(A)} and {\upshape(B)}. There exists a compact and convex  $D\subset \R^n$ and an event $\widetilde \Omega \in \cF$ of full probability such that, for each $\om \in \widetilde\Omega$:%and any $x\in Y$, 
\begin{enumerate}
\item[\upshape (i)] 
%\begin{equation}\label{takis100}
%\lim_{t\to \infty}\frac{\cR_t(x)(\om)}{t}=D \quad \text{in} \quad (\srC,\rho)
%\end{equation}
%and,% furthermore,
%\begin{equation}\label{lim:Rt-drift-uni}
%\lim_{t\to \infty} \sup_{x\in Y} \rho\left(\frac{\cR_t(x)(\om)}{t},D\right)=0.
%There exist a compact and convex set $D \subset \R^n$ and an event of full probability $\widetilde \Omega\subseteq\Omega$ such that, for each $\om \in \widetilde\Omega$,
%\begin{enumerate}
The large time average of reachable set converges locally uniformly to $D$, that is, as $t\to \infty$, 
\begin{equation}\label{lim:Rt-drift-uni}
\lim_{t\to \infty} \sup_{x\in Y} \rho\left(\frac{\cR_t(x)(\om)}{t},D\right)=0.
\end{equation}
\item[\upshape (ii)] As $\ep \to 0$,  the solution $u^\ep=u^\ep(\cdot,\cdot,\omega)$ of \eqref{PDE-drift} converges locally uniformly in $\R^n \times [0,\infty)$ to  $\overline u$  the solution to  \eqref{PDE-hom} with $\ol{H}$ defined as in \eqref{Hbar-def}.
\end{enumerate}
\end{thm}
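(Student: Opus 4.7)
The plan is to transplant the full program of Sections 3 and 4 to the drift-perturbed control system, since assumption (B2) was engineered precisely to preserve the two geometric ingredients on which that program rests: a uniform inner ball of controllable radius around every point, and a uniform time horizon within which any two points of the unit cube can be bridged by an admissible path. First I would write down the analogues of Lemma \ref{lem:Rab} and Lemma \ref{lem:ODE}. The straight-line path at speed at most $\eta$ is always admissible because $|\gam' - b| \le \eta + |b| \le \alpha \le a$, while $|\gam'| \le |b| + a \le \beta + \alpha - \eta$ on every admissible trajectory; this yields $\ol{B}_{t\eta}(x) \subset \cR_t(x)(\om) \subset \ol{B}_{t(\beta+\alpha-\eta)}(x)$ and the bridging horizon $\ell' := [\sqrt{n}/\eta] + 1$ replacing $\ell$.

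I would then verify that the translation identities of Lemma \ref{lem:R_rand} and the subadditivity of Lemma \ref{lem:STO-subadd} go through unchanged, as they rely only on the spatial $\Z^n$-periodicity and temporal stationarity of the coefficients, which (A1) and (B1) impose jointly on the pair $(a,b)$. From here the subadditive ergodic theorem applied to $X_{m,k}(\om) := \co \cR_{k-m}(Y)(\tau_m\om) + \tilde{Y}$ delivers a compact convex $D$ satisfying $\ol{B}_\eta \subset D \subset \ol{B}_{\beta+\alpha-\eta}$ with $m^{-1}\co \cR_m(Y)(\om) \to D$ almost surely, and the extreme-point lemmas together with the long concatenation argument in the proof of Theorem \ref{thm:STO-R} go through verbatim using $\ell'$ in place of $\ell$; the sandwich $\cR_{t-\ell'}(Y)(\tau_{\ell'}\om) \subset \cR_t(x)(\om) \subset \cR_t(Y)(\om)$ valid for $x \in Y$ then yields the uniform-in-$x$ statement (i) exactly as in the proof of Theorem \ref{thm:STO-R0}.

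For part (ii), the Legendre transform of $H(x,t,p,\om) = a|p| + b\cdot p$ vanishes on $\{q : |q - b(x,t,\om)| \le a(x,t,\om)\}$ and equals $+\infty$ elsewhere, so the Lax-Oleinik formula again collapses to $u^\ep(x,t,\om) = \inf\{u_0(y) : x/\ep \in \cR_{t/\ep}(y/\ep)(\om)\}$. I would then copy the proof of Theorem \ref{thm:stoch}: using (i), the level sets $K_\ep(x,t,\om) := \{y : x \in \ep([y/\ep] + \cR_{t/\ep}(y/\ep - [y/\ep])(\om))\}$ converge in Hausdorff distance, uniformly on bounded $(x,t)$, to $K(x,t) := x - tD$; these sets are eventually uniformly compact thanks to the upper ball bound, and taking the infimum of $u_0$ over them produces the homogenized limit $\overline{u}(x,t) = \inf\{u_0(y) : x \in y + tD\}$, which is the unique viscosity solution of \eqref{PDE-hom} with $\ol{H}$ given by \eqref{Hbar-def}.

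The only step I would pause over is the verification of the lower ball inclusion and the uniform bridging time, since these are the single pressure points at which (B2) is used; once they are in place the remainder of the argument is set-inclusion bookkeeping in the Hausdorff metric, which is entirely blind to the presence of the ambient drift.
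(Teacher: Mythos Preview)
Your proposal is correct and follows exactly the approach the paper takes: the paper explicitly states that the proof of Theorem~\ref{thm:STO-drift} is identical to those of Theorems~\ref{thm:STO-R0} and~\ref{thm:stoch}, with the only modifications being the ball bounds $\ol{B}_{t\eta}(x) \subset \cR_t(x)(\om) \subset \ol{B}_{t(\beta+\alpha-\eta)}(x)$ and the bridging time $\ell' = [\sqrt{n}/\eta]+1$, precisely as you identified. Your verification that (B2) supplies the inner-ball and bridging-time ingredients, and that (A1)/(B1) preserve the translation and subadditivity lemmas, matches the paper's own remarks preceding the theorem statement.
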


%%%%%%%%%%%%%%%%%%%%%%%%%%%%%%%%%%%%%%%%%%%%%%%%%
\subsection*{A non-coercive Hamilton-Jacobi equation.}

We are interested in the limit, as $\ep \to 0$, of the solution $v^\ep = v^\ep(x',t,\om)$ of 
\begin{equation}\label{APP}
\begin{cases}
v^\ep_t+a\left(\frac{x'}{\ep}\right)|D_xv^\ep|+v^\ep_{x_{n+1}}=0 \quad &\text{in} \ \R^{n+1} \times (0,\infty)\\
v^\ep =v_0 \quad &\text{on} \ \R^{n+1} \times \{0\}.
\end{cases}
\end{equation}
Here $x'=(x,x_{n+1})\in \R^n \times \R$ and $a\in C^{0,1}(\R^{n+1})$ satisfies {\upshape(A)}, with $x_{n+1}$ playing the role of $t$.
We also write $p',q', y'\in \R^{n+1}$ as $p'=(p,p_{n+1}), q'=(q,q_{n+1}), y'=(y,y_{n+1})\in \R^n \times \R$. 

For $(y',p')\in \R^{n+1}\times \R^{n+1}$, the Hamiltonian is
\[
H'(y',p')=a(y')|p|+p_{n+1},
\]
and,  for $(q',y')\in \R^{n+1} \times \R^{n+1}$, the  Lagrangian  is %could be computed as, for $(q',y')\in \R^{n+1} \times \R^{n+1}$,
\[
L'(y',q')=\begin{cases}
0, \qquad &\text{for}\ q_{n+1}=1, \ \text{and}\ |q| \leq a(y'),\\
+\infty, \qquad &\text{otherwise.}
\end{cases}
\]

The representation formula for the solution $v^\ep$ is 
\begin{equation*}
\begin{aligned}
v^\ep(x',t,\om) &= \inf_{y \in \R^n} \Big\{ v_0(y,x_{n+1}-t) \;:\; \text{there exists} \ \gam:\left[0,\ep^{-1}t\right] \to \R^n \ \text{such that } \gam(0)=\frac{y}{\ep},\\
&\qquad \qquad \qquad \ \gam\left(\frac{t}{\ep}\right)=\frac{x}{\ep}, \text{ and } |\dot{\gam}(s)| \leq a\left(\gam(s),\frac{x_{n+1}-t}{\ep}+s\right) \text{ for a.e } s \in \left(0,\ep^{-1}t\right) \Big\}.
\end{aligned}
\end{equation*}

Recalling the definition of the reachable set $\cR_t(x,s)(\om)$ in Section \ref{sec:prelim}, with the associated control system \eqref{control_syst}, we rewrite the formula as
\begin{equation*}
v^\ep(x',t,\om) = \inf_{y \in \R^n} \left\{ v_0(y,x_{n+1}-t) \;:\; \frac{x}{\ep} \in \cR_{\frac{t}{\ep} + \left(\frac{x_{n+1}-t}{\ep}\right)} \left(\frac{y}{\ep}, \frac{x_{n+1}-t}{\ep}\right)(\om) \right\}.
\end{equation*}

We have the following homogenization result for $v^\ep$.

\begin{thm}\label{thm:homAPP}
Assume {\upshape(A)}. Let $\widetilde\Omega$ be as defined in Theorem \ref{thm:STO-R0}. Then, for each $\om \in \widetilde\Om$, the  solution $v^\ep$ of \eqref{APP} converges locally uniformly in $\R^{n+1} \times (0,\infty)$ to the solution $\overline v$ of 
\begin{equation}\label{APP-hom}
\begin{cases}
\overline v_t+\ol{H}(D_x \overline v)+\overline v_{x_{n+1}}=0 \quad &\text{in} \ \R^{n+1} \times (0,\infty)\\
\overline v =v_0 \quad &\text{on} \ \R^{n+1} \times \{0\},
\end{cases}
\end{equation}
where $\ol{H}$ is defined in \eqref{Hbar-def}.
\end{thm}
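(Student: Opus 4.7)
The plan is to reduce the non-coercive equation \eqref{APP} to the coercive equation \eqref{PDE1} via the method of characteristics, apply Theorem~\ref{thm:stoch} for the coercive case at each frozen characteristic value, and upgrade pointwise-in-$z$ convergence to locally uniform convergence using equicontinuity.

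First, I would introduce the characteristic variable $z := x_{n+1} - t$ and set $w^\ep(x, z, t, \om) := v^\ep(x, z + t, t, \om)$. A direct computation using the chain rule and the PDE \eqref{APP} gives
\begin{equation*}
w^\ep_t + a\!\left(\frac{x}{\ep}, \frac{z+t}{\ep}, \om\right)|D_x w^\ep| = 0, \qquad w^\ep(x, z, 0, \om) = v_0(x, z),
\end{equation*}
so that for each frozen $z$ the equation is of the coercive type \eqref{PDE1} with an $\ep$-dependent translation of the coefficient. Writing $z/\ep = k + \sigma$ with $k = [z/\ep] \in \Z$ and $\sigma = \{z/\ep\} \in [0,1)$ and invoking the temporal stationarity in (A1), the coefficient rewrites as $a(x/\ep, t/\ep + \sigma, \tau_k \om)$, which differs from the coefficient in \eqref{PDE1} only by the microscopic shift $\sigma$ and the integer translation $\tau_k$ of the realization.

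Next, I would apply Theorem~\ref{thm:stoch} (or a mild adaptation absorbing the shift $\sigma$) to each frozen $z$ to obtain $w^\ep(\cdot, z, \cdot, \om) \to \ol{w}(\cdot, z, \cdot)$ locally uniformly in $(x, t)$, where $\ol{w}(\cdot, z, \cdot)$ solves $\ol{w}_t + \ol{H}(D_x \ol{w}) = 0$ with initial datum $v_0(\cdot, z)$. I would carry out the argument on the refined full-measure event $\widetilde\Omega^* := \bigcap_{k\in\Z}\tau_{-k}\widetilde\Omega$ so that $\tau_k\om \in \widetilde\Omega$ for every integer $k$ and every $\om \in \widetilde\Omega^*$. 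The effective Hamiltonian $\ol{H}$ is independent of $k$ since the set $D$ from Theorem~\ref{thm:STO-R0} is invariant under shifts of the realization by ergodicity.

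Finally, to promote pointwise-in-$z$ convergence to locally uniform convergence, I would invoke equicontinuity. The Lax--Oleinik representation combined with the uniform continuity of $v_0$ and the bounds $\al \le a \le \beta$ ensures that the family $(w^\ep(\cdot,\cdot,\cdot,\om))_\ep$ is locally equicontinuous in all variables with modulus independent of $\ep$; Arzel\`a--Ascoli then yields locally uniform convergence. Reverting to the original variables gives $v^\ep(x', t, \om) = w^\ep(x, x_{n+1}-t, t, \om) \to \ol{w}(x, x_{n+1}-t, t) = \ol{v}(x', t)$ locally uniformly on $\R^{n+1} \times (0, \infty)$. I expect the main delicate point to be the handling of the $\ep$-dependent fractional shift $\sigma = \{z/\ep\}$, which formally lies outside the direct scope of Theorem~\ref{thm:stoch}; this is resolved either by noting that $D$, and hence $\ol{H}$, is invariant under an $O(1)$ shift in the temporal argument of $a$ (an immediate consequence of the reachable-set analysis in Section~\ref{sec:RY}), or by re-running that analysis verbatim for the shifted coefficient $a(\cdot, \cdot + \sigma, \cdot)$.
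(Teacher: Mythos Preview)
Your change-of-variables reduction is clean and the spirit is right, but there are two genuine gaps that prevent the argument from closing by quoting Theorem~\ref{thm:stoch} as a black box.

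\textbf{Gap 1: the $\ep$-dependent integer shift.} For frozen $z$ you write the coefficient as $a(x/\ep,\,t/\ep+\sigma,\,\tau_k\om)$ with $k=[z/\ep]$ and $\sigma=\{z/\ep\}$. You then invoke Theorem~\ref{thm:stoch} on the realization $\tau_k\om$, noting that $\tau_k\om\in\widetilde\Omega$ for $\om\in\widetilde\Omega^*$. But Theorem~\ref{thm:stoch} asserts convergence for a \emph{fixed} realization as $\ep\to0$, whereas here $k=k(\ep)\to\pm\infty$. Nothing in the statement of Theorem~\ref{thm:stoch} (or Theorem~\ref{thm:STO-R0}) provides uniformity of the convergence over $\om\in\widetilde\Omega$, and such uniformity is generally false in stochastic homogenization. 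You flag the bounded fractional shift $\sigma$ as the delicate point, but the unbounded integer shift $k(\ep)$ is the real obstacle, and your proposal does not address it.

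\textbf{Gap 2: the equicontinuity claim.} You assert that the Lax--Oleinik formula together with $\al\le a\le\beta$ yields an $\ep$-independent modulus of continuity for $w^\ep$ in all variables. This is precisely the a~priori estimate that the paper stresses is \emph{not} available for coercive Hamiltonians with linear growth in $p$ and temporal oscillations (see the discussion in the Introduction following \eqref{C-lip}). The bounds on $a$ control the \emph{size} of the reachable sets but not their continuous dependence on $x$ or on the initial time $z/\ep$; translating an admissible path does not yield an admissible path because $a$ is position-dependent. Equicontinuity is a \emph{consequence} of the homogenization result, not an input to it.

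The paper's own argument avoids both issues by working directly with the representation formula and re-entering the reachable-set analysis of Section~\ref{sec:RY}: it uses the \emph{proof} (not the statement) of Theorem~\ref{thm:stoch}, in particular the machinery behind Lemma~\ref{lem:STO-coR}, to show that $\ep\,\cR_{t/\ep+(x_{n+1}-t)/\ep}\bigl(\,\cdot\,,(x_{n+1}-t)/\ep\bigr)(\om)$ converges to $tD$ locally uniformly in $(x',t)$. To salvage your approach you would have to do the same: establish a version of Theorem~\ref{thm:STO-R} for $\cR_{t}(Y)(\tau_{k(t)}\om)/t$ with $k(t)$ growing linearly in $t$, which is essentially the content the paper is sketching.
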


We only give a sketch of proof. Since the complete argument follows exactly the proof of Theorem \ref{thm:stoch}, here we only give a sketch of proof.  Comparing the representation formula of $v^\ep$ with that of $u^\ep$ in \eqref{uep_rep}, we observe that the only complication in the former is the presence of the initial time $\frac{x_{n+1}-t}{\ep}$ in the reachable set. The difficulty caused by this can be overcome, since we know precisely how the reachable sets change with respect to integral translations in time, and we can control the difference between non-integral translations and their nearest integral ones. It follows, from the proof of Theorem \ref{thm:stoch}, that, locally uniformly for $(x',t) \in \R^{n+1}\times (0,\infty)$,
\begin{equation*}
\rho\left( \left\{y \in \R^n \;:\; x \in \ep \cR_{\frac{t}{\ep} + \left(\frac{x_{n+1}-t}{\ep}\right)} \left(\frac{y}{\ep}, \frac{x_{n+1}-t}{\ep}\right)(\om)\right\}, x - tD \right) \longrightarrow 0.
\end{equation*}
This shows essentially that $v^\ep$ converges,  locally uniformly,  to $\overline v(x',t) = \inf_{x - tD} v_0(y,x_{n+1}-t)$, the solution of \eqref{APP-hom}.
% locally uniformly.

%\begin{proof}[Sketch of proof]
%The representation formula of $v^\ep$ gives
%\begin{align*}
%v^\ep(x',t)&=\inf\Big\{ v_0(y')\,:\,\text{there exists} \ \eta:[0,t]\to \R^{n+1} \ \text{such that} \ \eta(0)=y', \eta(t)=x',\\
%&\qquad \qquad \qquad \eta(s)=(\gam(s),s) \in \R^n \times \R, \ \text{and} \ |\gam'(s)| \leq a\left(\frac{\gam(s)}{\ep},\frac{s}{\ep}\right) \ \text{for a.e.} \ s\in (0,t)\Big\}\\
%&=\inf \Big\{u_0(y,x_{n+1}-t)\,:\,\text{there exists} \ \gam:[0,t]\to \R^{n} \ \text{such that} \ \gam(0)=y, \gam(t)=x,\\
%&\qquad \qquad \qquad \qquad\qquad\qquad \qquad\qquad\qquad
%\text{and} \ |\gam'(s)| \leq a\left(\frac{\gam(s)}{\ep},\frac{s}{\ep}\right) \ \text{for a.e.} \ s\in (0,t)\Big\}\\
%&=\inf\left\{u_0(y,x_{n+1}-t)\,:\,x \in \ep \cR_{\frac{t}{\ep}}\left(\frac{y}{\ep}\right)\right\}\\
%&\xrightarrow{\ep \to 0}\ \inf \left\{u_0(y,x_{n+1}-t)\,:\,   x \in y+tD\right\}=v(x',t).
%\end{align*}
%\end{proof}

%%%%%%%%%%%%%%%%%%%%%%%%%%%%%%%%%%%%%%%%%%%%%%%%%%%%%%%%%%%%%%%%
\bibliographystyle{acm} %abbrv, acm, alpha, apalike, ieeetr, plain, siam and unsrt. 
\bibliography{jst}

\end{document}